\theoremstyle{plain}
\newtheorem{theorem}{Theorem}[section]
\newtheorem{lemma}[theorem]{Lemma}
\newtheorem{corollary}[theorem]{Corollary}
\theoremstyle{definition}
\newtheorem{definition}[theorem]{Definition}
\newtheorem{remark}[theorem]{Remark}
\def\acts{\curvearrowright}
\newcommand\op{{\operatorname{op}}}
\newcommand\CHProb{{\mathbf{CHPrb}}}
\newcommand\OpProbAlg{\mathbf{PrbAlg}}
\newcommand\Hilb{\mathbf{Hilb}}
\newcommand\Mes{\mathtt{Alg}}
\newcommand\Stone{\mathtt{Conc}}
\newcommand\Abs{\mathtt{Abs}}
\email{ajamneshan@ku.edu.tr}
\email{minghaopan@ucla.edu}
\begin{document}
\title[]{Uniform syndeticity in multiple recurrence}
\author[A. Jamneshan]{Asgar Jamneshan}
\address{Department of Mathematics\\
Ko\c{c} University \\
\.{I}stanbul, T\"urkiye}
\author[M. Pan]{Minghao Pan}
\address{The Division of Physics, Mathematics and Astronomy, Caltech, Pasadena, California, USA}
\date{\today }

\subjclass[2020]{Primary 37A30; Secondary 37A15}

\keywords{Multiple recurrence, uniform syndeticity, sated extensions, ultraproducts.}

\begin{abstract}
The main theorem of this paper establishes a uniform syndeticity result concerning the multiple recurrence of measure-preserving actions on probability spaces. More precisely, for any integers $d,l\geq 1$ and any $\varepsilon > 0$, we prove the existence of $\delta>0$ and $K\geq 1$ (dependent only on $d$, $l$, and $\varepsilon$) such that the following holds:

Consider a solvable group $\Gamma$ of derived length $l$, a probability space $(X, \mu)$, and $d$ pairwise commuting measure-preserving $\Gamma$-actions $T_1, \ldots, T_d$ on $(X, \mu)$. Let $E$ be a measurable set in $X$ with $\mu(E) \geq \varepsilon$. Then, $K$ many (left) translates of 
\begin{equation*}
\left\{\gamma\in\Gamma\colon
\mu(T_1^{\gamma^{-1}}(E)\cap T_2^{\gamma^{-1}} \circ
T^{\gamma^{-1}}_1(E)\cap \cdots \cap T^{\gamma^{-1}}_d\circ
T^{\gamma^{-1}}_{d-1}\circ \ldots \circ T^{\gamma^{-1}}_1(E))\geq \delta
\right\}
\end{equation*}
cover $\Gamma$. This result extends and refines uniformity results by Furstenberg and Katznelson.  

As a combinatorial application, we obtain the following uniformity result. For any integers $d,l\geq 1$ and any $\varepsilon > 0$, there are $\delta>0$ and $K\geq 1$ (dependent only on $d$, $l$, and $\varepsilon$) such that for all finite solvable groups $G$ of derived length $l$ and any subset $E\subset G^d$ with $m^{\otimes d}(E)\geq \varepsilon$ (where $m$ is the uniform measure on $G$), we have that $K$-many (left) translates of
\begin{multline*}
    \{g\in G\colon m^{\otimes d}(\{(a_1,\ldots,a_n)\in G^d\colon \\ (a_1,\ldots,a_n),(ga_1,a_2,\ldots,a_n),\ldots,(ga_1,ga_2,\ldots, ga_n)\in E\})\geq \delta \}
\end{multline*}
cover $G$.

The proof of our main result is a consequence of an ultralimit version of Austin's amenable ergodic Szem\'eredi theorem. 
\end{abstract}

\maketitle

\section{Introduction}

A subset of a group is called syndetic\footnote{Throughout, F\o lner nets are left F\o lner nets, and syndetic sets are left syndetic sets in non-commutative groups.} if the union of finitely many translates of it cover the whole group. 

One version \cite{furstenberg1985ergodic} of Furstenburg's multiple recurrence theorem \cite{furstenberg1977ergodic} is as follows:
\begin{theorem}\label{furstenburg}
For every abelian group $\Gamma$, each probability space $(X,\mu)$ with finitely many pairwise commuting
measure-preserving $\Gamma$-actions $T_i\acts (X,\mu)$, $i=1,\ldots,d$, and
all measurable sets $E$ in $X$ with positive measure,  the return set
\begin{equation*}
\left\{\gamma\in\Gamma\colon
\mu(E\cap T^{-\gamma}_1(E)\cap T^{-\gamma}_2(E)\cap \cdots \cap T^{-\gamma}_d(E))>
0\right\}
\end{equation*}
is syndetic. 
\end{theorem}
In this paper, our objective is to explore the uniformity of multiple recurrence theorems. There exist two directions of uniformity. Firstly, can we establish a uniform lower bound for the measure of the multiple recurrence event, denoted as $\mu(E \cap T_1^\gamma(E) \cap T_2^\gamma(E) \cap \cdots \cap T_d^\gamma(E))$, keeping it away from zero? Secondly, can we assert that the return set is uniformly not too small? Increasing the value of $d$ or shrinking the measure of the set $E$ might lead to a reduction in both the multiple recurrence event and the return set. Nevertheless, our aspiration is for these measures to remain independent of certain factors: the group $\Gamma$, the probability space $(X, \mu)$, the commuting measure-preserving $\Gamma$-actions $T_1,\ldots,T_d$, and the choice of measurable set $E$ - as long as we fixed $d$ and $\mu(E)$.

To pursue the second aspect of uniformity, it becomes necessary to establish a method for quantifying the size of a subset within a group. In light of the statement of Theorem \ref{furstenburg}, a natural choice is to utilize the concept of $K$-syndeticity\footnote{In a previous version of this paper, we quantified syndeticity using the size of the lower Banach density of a subset. We are indebted to the anonymous referee for suggesting the use of the more natural (and seemingly stronger) concept of $K$-syndicity, which also had the benifit of significantly simplifying the proof of our main uniform syndeticity result.}: Given a group $\Gamma$ and an integer $K\geq 1$, a subset $S\subset \Gamma$ is said to be \emph{$K$-syndetic} if $K$ many translates of $S$ cover $\Gamma$. 

Numerous findings pertaining to uniform syndeticity are available within the existing literature. Among these, a notable contribution was made by Furstenberg and Katznelson, who demonstrated the prevalence of uniform syndeticity across all $\mathbb{Z}$-actions.

\begin{theorem}[Uniform syndeticity, $\mathbb{Z}$-case]
\label{thm-main-Z} For every integer $d\geq 1$ and any $\varepsilon>0$ there
are $\delta>0$ and $K\geq 1$ (only depending on $\varepsilon,d$) such that for
any probability space $(X,\mu)$, every $d$ many pairwise commuting
measure-preserving transformations $T_i\colon X\to X$, $i=1,\ldots,d$, and
all measurable sets $E$ in $X$ with $\mu(E)\geq \varepsilon$ it holds that 
\begin{equation*}
\left\{n\in\mathbb{Z}\colon
\mu(E\cap T^{-n}_1(E)\cap T^{-n}_2(E)\cap \cdots \cap T^{-n}_d(E))\geq
\delta \right\}
\end{equation*}
is $K$-syndetic. 
\end{theorem}

\begin{proof}
This result can be deduced from \cite[Theorem 2.1(iii)]{bergelson-uniformity}.
\end{proof}

The following weaker assertion is established for a fixed arbitrary countable abelian group by Furstenberg and Katznelson in their work \cite{furstenberg1985ergodic}. In this version of uniform syndeticity, the probability of the multiple recurrence event is \textit{not} shown to be uniformly bounded away from zero, as observed in Theorem \ref{thm-main-Z}, or later shown in Theorem \ref{thm-strength}. 

\begin{theorem}[Weak uniform syndeticity, countable abelian case]
\label{thm-main-abelian} Let $\Gamma$ be a countable abelian group. For
every integer $d\geq 1$ and any $\varepsilon>0$ there exists $K\geq 1$ (only
depending on $\varepsilon,d$ and $\Gamma$) such that for any probability space 
$(X,\mu)$, every $d$ many pairwise commuting measure-preserving actions  $T_i\colon \Gamma\acts(X,\mu)$, $i=1,\ldots,d$, and every
measurable set $E$ in $X$ with $\mu(E)\geq \varepsilon$ it holds that 
\begin{equation*}
\left\{\gamma\in\Gamma\colon \mu(E\cap
T^{-\gamma}_1(E)\cap T^{-\gamma}_2(E)\cap \cdots \cap T^{-\gamma}_d(E))>0
\right\}
\end{equation*}
is $K$-syndetic. 
\end{theorem}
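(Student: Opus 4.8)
The plan is to deduce this weak uniform syndeticity statement from Theorem~\ref{thm-main-abelian}'s countable abelian setting by reducing to the already-established $\Z$-case through a carefully chosen embedding argument, combined with a compactness/contradiction scheme to extract the uniform constant $\eta$. First I would try the most direct route: since $\Gamma$ is countable abelian, it is a direct limit of finitely generated abelian groups, and each finitely generated abelian group embeds into a product $\Z^k \times F$ for some finite group $F$. The idea would be to pull back the $d$ commuting actions $T_i\colon \Gamma\to\Aut(X,\mu)$ along such structure and relate the return-time set in $\Gamma$ to a return-time set for a $\Z^m$-action, where Theorem~\ref{thm-main-Z} (or its multidimensional analogue via \cite{bergelson-uniformity}) supplies the uniform bounds. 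The subtlety is that the conclusion here is qualitatively weaker --- positivity $>0$ rather than a quantitative $\geq\delta$ --- so I do not need the full strength of the $\Z$-case; I only need positive lower Banach density of the set where the intersection has positive measure.

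The core approach I would carry out is a contradiction argument producing the uniform $\eta$. Suppose the statement fails: then for some fixed $\eps>0$ and $d$ there is a sequence of systems $(X_n,\mu_n)$, actions $T_i^{(n)}$, and sets $E_n$ with $\mu_n(E_n)\geq\eps$ but for which the lower Banach density of the good return set tends to $0$. The natural move is to pass to an ultraproduct $\prod_{n\to\mathcal U}(X_n,\mu_n)$, which carries a limiting $\Gamma$-action by commuting measure-preserving transformations on a (Loeb) probability space, together with a limiting set $E$ of measure at least $\eps$. This is exactly the kind of ultraproduct-of-measure-preserving-systems construction the abstract promises to build its main results on. On the ultraproduct, one applies the multiple recurrence theorem --- in the countable abelian case this is the Furstenberg--Katznelson theorem \cite{furstenberg1985ergodic} itself, or its uncountable extension --- to conclude that the return set is syndetic, hence has positive lower Banach density, contradicting the vanishing densities assumed along the sequence. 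Making the lower Banach densities transfer correctly across the ultralimit (so that a syndetic set upstairs forces a uniform positive density downstairs) is where the argument has to be handled with care.

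The hard part will be the transfer of lower Banach density through the ultraproduct, together with the measurability issues inherent in the Loeb construction. Concretely, one must verify that the limiting set $E=\lim_{n\to\mathcal U}E_n$ is genuinely measurable on the ultraproduct, that the measure $\mu(E\cap T_1^{-\g}E\cap\cdots\cap T_d^{-\g}E)$ is the ultralimit of the corresponding finite-stage quantities uniformly in $\g$, and --- most delicately --- that \emph{syndeticity} of the return set for the limit action descends to a \emph{uniform positive lower Banach density} for the finite systems. The Banach density is defined via suprema over F\o lner sequences, and positivity of $\underline{\mathtt{BD}}_\Gamma$ for the limit does not automatically bound the finite-stage densities from below without an argument that the relevant F\o lner averages are themselves transferred by the ultralimit; this interchange of the ultraproduct limit with the supremum over F\o lner sets and the infimum defining lower density is the genuine obstacle. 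I expect the cleanest resolution is to work with a fixed F\o lner sequence (invoking the independence of $\underline{\mathtt{BD}}_\Gamma$ from the choice of F\o lner sequence, as used already in the proof of Theorem~\ref{thm-main-Z}) and to bound the density uniformly over the finite systems by pushing the multiple recurrence bound obtained on the ultraproduct back down along the ultrafilter.
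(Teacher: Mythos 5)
The paper does not prove this theorem by any compactness argument at all: its ``proof'' is a one-line citation to \cite[\S 10]{furstenberg1985ergodic}, where Furstenberg and Katznelson obtain exactly this weak uniformity as a byproduct of their IP-recurrence machinery. Your route is therefore genuinely different from the paper's proof of this statement --- but it is essentially the paper's proof of Theorem \ref{thm-main-general}, specialized to countable abelian $\Gamma$ (which is legitimate, since abelian groups are uniformly amenable). That specialization would in fact yield the stronger conclusion with a quantitative threshold $\geq\delta$ in place of $>0$, so the strategy is sound in principle; the first paragraph's detour through finitely generated subgroups and $\Z^k\times F$ is unnecessary and can be discarded.

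There is, however, a concrete gap in the way you set up the ultraproduct, precisely at the point you yourself flag as the obstacle. You propose to equip the Loeb space $\prod_{n\to\mathcal U}(X_n,\mu_n)$ with a limiting action of $\Gamma$ itself (the diagonal action $\gamma\mapsto\lim_{n\to\mathcal U}T_i^{(n),\gamma}$). With that choice the return set you control in the limit is $\{\gamma\in\Gamma:\lim_{n\to\mathcal U}\mu_n(E_n\cap\cdots)>0\}$, and there is no mechanism relating $\underline{\mathtt{BD}}_\Gamma$ of this set to the vanishing densities $\underline{\mathtt{BD}}_\Gamma(A_n)$ of the finite-stage good sets $A_n$, because the $A_n$ vary with $n$ and need not have any common structure; no choice of F\o lner sequence repairs this, since lower Banach density is an infimum over all invariant means, not a quantity attached to one F\o lner sequence. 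The correct mechanism, as in the paper's proof of Theorem \ref{thm-main-general}, is to let the \emph{ultrapower} $\Gamma^*$ act internally on the Loeb algebra (here $\Gamma^*$ is abelian, hence amenable), apply the inseparable multiple recurrence theorem (Theorem \ref{thm-main1}) to the resulting $(\Gamma^*)^d$-system to get an internal return set $A^*=\prod_{n\to\mathcal U}A_n\subset\Gamma^*$ of positive lower Banach density, and then invoke the two transfer lemmas (Lemmas \ref{lem-dens2} and \ref{lem-density1}) identifying $\underline{\mathtt{BD}}_{\Gamma^*}(A^*)$ with $\mathtt{st}(\lim_{n\to\mathcal U}\underline{\mathtt{BD}}_\Gamma(A_n))$, which then contradicts $\underline{\mathtt{BD}}_\Gamma(A_n)\to 0$. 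Note also that the multiple recurrence input cannot be the classical Furstenberg--Katznelson theorem as you suggest, since the Loeb probability algebra is inseparable and $\Gamma^*$ is uncountable; this is exactly why the paper needs the uncountable Theorem \ref{thm-main1}. Without replacing $\Gamma$ by $\Gamma^*$ and supplying these transfer lemmas, the descent step of your argument does not close.
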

\begin{proof}
The claim follows from combining the results in \cite[\S 10]{furstenberg1985ergodic}, see the last remark therein.
\end{proof}
\begin{remark}
In fact, in \cite{furstenberg1985ergodic}, Furstenberg and Katznelson establish that the return set 
$$\left\{\gamma\in\Gamma\colon \mu(E\cap
T^{-\gamma}_1(E)\cap T^{-\gamma}_2(E)\cap \cdots \cap T^{-\gamma}_d(E))>0
\right\}$$
satisfies stronger notions of largeness than syndeticity such as IP$^*$ or even IP$^*_r$. 
However, we will focus on strengthening and generalizing the slightly weaker consequence stated in Theorem \ref{thm-main-abelian}.
\end{remark}

Our main result establishes a new proof and a joint generalization and strengthening of the Theorems \ref{thm-main-Z} and \ref{thm-main-abelian} by relaxing the dependence of $\delta$ and $K$ on the acting group:
\begin{theorem}\label{thm-strength}
For all integers $d,l\geq 1$ and any $\varepsilon>0$ there exist $\delta>0$ and $K\geq 1$ (only
depending on $\varepsilon,d,l$) such that for any solvable group $\Gamma$ of derived length $l$, any probability space 
$(X,\mu)$, every $d$ many pairwise commuting measure-preserving actions $T_i\colon \Gamma\acts(X,\mu)$, $i=1,\ldots,d$, and every
measurable set $E$ in $X$ with $\mu(E)\geq \varepsilon$ it holds that 
\begin{equation*}
\left\{ \gamma \in \Gamma \colon \mu( E\cap
T_{1}^{\gamma ^{-1}}(E)\cap \left( T_{[1,2]}^{\gamma }\right) ^{-1}(E)\cap
\cdots \cap \left( T_{[1,d]}^{\gamma }\right)^{-1}(E))\geq \delta \right\}
\end{equation*}
is $K$-syndetic, where $T_{[a,b]}^{\gamma }:=T_{a}^{\gamma }\circ
T_{a+1}^{\gamma }\circ \cdots \circ T_{b}^{\gamma}$.
\end{theorem}
We recall that the derived length $n$ of a solvable group $\Gamma$ is the least $n$ for which $\Gamma^{(n)}=1$, where $\Gamma^{(i)}$ is recursively defined by $\Gamma^{(0)}=\Gamma$ and $\Gamma^{(i+1)}$ is the commutator subgroup $[\Gamma^{(i)},\Gamma^{(i)}]$ of $\Gamma^{(i)}$.

\begin{remark}
In fact, we establish a more general version of Theorem \ref{thm-strength} where we can consider any uniformly amenable class of groups, of which a class of solvable groups of fixed derived length is an example. See Section \ref{sec-prelim} for the definition of uniform amenability and Theorem \ref{thm-strength1} for the general statement.
\end{remark}

\begin{remark}
  It is important to discern the variance in the articulation of the multiple recurrence event in the abelian case in the Theorems \ref{furstenburg}, \ref{thm-main-Z}, and \ref{thm-main-abelian}, where we consider $T_i^\gamma$ rather than the composite actions $T_{[1,i]}^{\gamma}$ as presented in the formulation of Theorem \ref{thm-strength}. The possibility of attaining an analogous formulation to Theorems \ref{furstenburg}, \ref{thm-main-Z}, and \ref{thm-main-abelian} does indeed arise for nilpotent groups. However, it is crucial to note, as observed by Bergelson and Leibman in \cite{bergelson2004failure}, that such a formulation fails to hold universally for solvable groups.
\end{remark}

\begin{remark}
Quantitatively stronger results in the form of Khintchine-type bounds are available in more specific situations, as seen in \cite{abo, bhk, btz-recurrence, chu-two, lowerbound, shalom-khintchine}. To the best of our knowledge, Theorem \ref{thm-strength} is the first result of its kind to establish the existence of uniform bounds for arbitrary $d$, independent of $\Gamma$ (within a large class of groups), and without requiring the hypothesis of ergodicity. 
\end{remark}

\begin{remark}
In \cite{j-dcds}, the first author and coauthors established a slightly weaker formulation of Theorem \ref{thm-strength} (in fact, of the general Theorem \ref{thm-strength1}) for two commuting actions involving, instead of $K$-syndeticity, a uniformly lower bound on the lower Banach density of the return set. The proof in \cite{j-dcds} relied on certain technical lemmas about the interplay of Hahn-Banach type extensions for finitely additive invariant means and ultralimits of lower Banach densities. Using the stronger $K$-syndeticity formulation, our proof not only establishes a stronger generalization of the result in \cite{j-dcds} to finitely many commuting actions but also significantly simplifies (in particular, no Hahn-Banach type theorems and ultralimits of lower Banach densities are required anymore) and basically follows from an ultralimit construction of Austin's amenable ergodic Szemerédi theorem as stated next.
\end{remark}

A pivotal step in demonstrating Theorem \ref{thm-strength} hinges on employing ultraproducts of measure-preserving dynamical systems. However, the resultant ultraproduct groups are often not countable, and the corresponding Loeb probability spaces lack separability. Addressing these challenges introduces certain measure-theoretic subtleties, discussed comprehensively in \cite{jt-fund}. To navigate around these intricacies, an abstract category of probability algebra dynamical systems $\mathbf{PrbAlg}_\Gamma$ has been identified (see \cite{j-etds,jt-studia,jt-fund,jt-etds}). The abstract system is obtained from a concrete probability space by abstracting away the intrinsic point structure and exclusively focusing on the relationships between measurable sets, considering operations such as intersections, unions, and complementations. $\mathbf{PrbAlg}_\Gamma$, along with the tools to work with its objects, is gathered in Section \ref{sec:category}

Equipped with these tools in the domain of uncountable ergodic theory, we extend Austin's amenable multiple recurrence theorem \cite{austin2016nonconventional} to encompass the actions of uncountable amenable groups on inseparable probability spaces in the following theorem. This uncountable variant of Austin's theorem is then applied to the ultraproduct systems, playing a key role in proving Theorem \ref{thm-strength}.
\begin{theorem}
\label{thm-main1} Let $\Gamma $ be an arbitrary discrete amenable group and let $(X,\mu,T)$ be a $\mathbf{PrbAlg}_{\Gamma^d}$-system, that is, there are  finitely many commuting measure-preserving $\Gamma$-actions $T_i\colon \Gamma\acts(X,\mu)$, $i=1,\ldots,d$, where $(X,\mu)$ is a probability algebra. Let $f_{1},\ldots ,f_{d}\in L^{\infty}(X,\mu)$, and let $(\Phi _{\kappa})$ be a
F\o lner net for $\Gamma$. Then the limit 
\begin{equation}
\lim_{\kappa}\frac{1}{|\Phi _{\kappa}|}\sum_{\gamma \in \Phi
_{\kappa}}\prod_{i=1}^{d}f_i\circ {T_{[1,i]}^{\gamma}}  \label{eq-lim}
\end{equation}
exists in $L^{2}(X,\mu )$ and is independent of the F\o lner net.
Moreover, if a measurable set $E$ in $X$ is such that $\mu (E)>0$, then 
\begin{equation}
\lim_{\kappa}\frac{1}{|\Phi _{\kappa}|}\sum_{\gamma \in \Phi _{\kappa}}\mu \left(
\bigcap_{i=0}^{d}T_{[1,i]}^{\gamma^{-1}}(E)\right) >0.  \label{eq-mr}
\end{equation} 
In particular, there exists $\varepsilon >0$ such that 
\begin{equation}
\left\{ \gamma \in \Gamma \colon \mu \left(
\bigcap_{i=0}^{d}T_{[1,i]}^{\gamma^{-1}}(E)\right) >\varepsilon \right\}
\label{eq-syndetic}
\end{equation} 
is syndetic in $\Gamma $.
\end{theorem}
The case of two commuting transformations of Theorem \ref{thm-main1} was previously established in \cite{j-dcds} by the first author and coauthors. They generalized the proof of the amenable double recurrence theorem by Bergelson, McCutcheon, and Zhang from \cite{bergelson1997roth}. Zorin-Kranich \cite{zorin2016norm} establishes the limit claim \eqref{eq-lim} in Theorem \ref{thm-main1} in full generality using an adaptation of functional analytic methods developed by Walsh \cite{walsh2012norm}, who established the $L^2$-limit in the case of finitely many actions of a nilpotent group. However, Zorin-Kranich's result does not provide information about the limit; in particular, it does not yield the multiple recurrence statements \eqref{eq-mr} and \eqref{eq-syndetic}. These multiple recurrence statements are established in the case of countable amenable groups by Austin \cite{austin2016nonconventional} using sated extensions. Our proof of Theorem \ref{thm-main1} will modify necessary steps in \cite{austin2016nonconventional} to tailor his proof to our uncountable setting. 

\subsection{Combinatorial application}
As an immediate consequence of Theorem \ref{thm-strength}, we obtain the following combinatorial application. For a finite group $G$, we denote by $m$ the uniform measure. On $G^d$, we denote by $m^{\otimes d}$ the $d$-fold product of the uniform measure. 

\begin{corollary}
Let $d,l\geq 1$ be integers and let $\varepsilon > 0$. Then there are $\delta=\delta(d,l,\varepsilon)>0$ and $K=K(d,l,\varepsilon)\geq 1$ such that for all finite solvable groups $G$ of derived length $l$ and any subset $E\subset G^d$ with $m^{\otimes d}(E)\geq \varepsilon$, we have that 
\begin{multline*}
    \{g\in G\colon m^{\otimes d}(\{(a_1,\ldots,a_n)\in G^d\colon \\ (a_1,\ldots,a_n),(ga_1,a_2,\ldots,a_n),\ldots,(ga_1,ga_2,\ldots, ga_n)\in E\})\geq \delta \}
\end{multline*}
is $K$-syndetic. 
\end{corollary}

\begin{remark}
In \cite[Theorem 1.5]{DRZ}, the authors establish $K$-syndeticity for the density of triangle configurations with a Khintchine-type lower bound in a fixed class of quasirandom ultraproduct groups. In \cite[Corollary 1.6]{DRZ}, they deduce a similar consequence for the class of all non-cyclic finite simple groups. Their proof relies on a convergence theorem along minimal idempotent ultrafilters for ergodic averages formed by two commuting actions of a minimally almost periodic group. 
\end{remark}

\subsection{Discussion}

In Theorem \ref{thm-strength1} below, we prove a more general version of Theorem \ref{thm-strength}, establishing a strong form of uniform syndeticity within any \emph{uniformly amenable} class of groups. We then derive Theorem \ref{thm-strength} by demonstrating that the class of solvable groups with a fixed derived length is uniformly amenable. A key property in this verification is that the class of solvable groups with a fixed derived length is closed under countable direct products.

Since Theorem \ref{thm-main1} holds for all amenable groups, a natural question arises: Does Theorem \ref{thm-strength} remain valid for the entire class of amenable groups (that is, $K,\delta$ only depend on $\varepsilon, d$ and are uniform for all amenable acting groups)? The methods employed in this paper cannot address this question\footnote{The countable direct product of arbitrary nilpotent groups is not amenable in general, which essentially undermines our strategy based on Theorem \ref{thm-main1} to prove uniform syndeticity. We are grateful to Dave Benson for this observation. Additionally, an example of an amenable group that is not uniformly amenable is given in \cite{Wysoczanski}}.

On the other hand, the results of \cite{bergelson2007central} give hope that Theorem \ref{thm-main1} holds for all amenable groups (or even for the class of all groups!). In \cite[Theorem 1.3]{bergelson2007central}, the authors establish the consequence\footnote{Actually, they establish the stronger conclusion that the return set is $\mathcal{C}^*$, we refer the interested reader to \cite{bergelson2007central} for the definition of a central$^*$ subset of a group.} \eqref{eq-syndetic} in Theorem \ref{thm-main1} for the action of an arbitrary countable group in the case of two commuting actions. Proving an uncountable version of \cite[Theorem 1.3]{bergelson2007central} should, in principle, yield the analogue of Theorem \ref{thm-strength} for the class of all groups in the case of two commuting actions by the same proof as given in Section \ref{sec-proof1}. To our knowledge, the analogue of \cite[Theorem 1.3]{bergelson2007central} in the case of more than two commuting actions of an arbitrary countable group is unknown\footnote{A potential line of attack, suggested by Austin \cite{austin2016nonconventional}, is to combine the technique of stated extensions and the ultrafilter techniques in \cite{bergelson2007central}}. We hope to address these questions in future work.

\subsection*{Acknowledgments}

AJ was supported by DFG research fellowship JA 2512/3-1. We thank John Griesmer for helpful comments. We are sincerely grateful to an anonymous referee for a very constructive and detailed report, which helped to state a beautiful strengthening of our uniformity result and improve the presentation. 

\section{Tools}\label{sec-prelim}
\subsection{The category of probability algebra dynamical systems }\label{sec:category}
We now formalize the ``point-free'' approach by introducing the category of probability algebra dynamical systems
and the canonical model functor, see Figure \ref{fig:categories}. For a
comprehensive background, references, and any unexplained concepts which are
used in the sequel, we refer the interested reader to \cite{jt-fund}.

\begin{figure}[tbp]
\centering
\begin{tikzcd}
      & \CHProb  \arrow[d,blue, "\Mes\circ \Abs"'] & \CHProb_\Gamma  \arrow[d, blue, "\Mes\circ \Abs"']  \arrow[l, tail, blue]  \\
       & \OpProbAlg \arrow[u, bend right, shift right = 3ex, hook, two heads, "\Stone"'] \arrow[d, "L^2"', blue, tail] & \OpProbAlg_\Gamma \arrow[l, tail, blue]  \arrow[u, bend right, shift right = 3, hook, two heads, "\Stone"']  \arrow[d, "L^2"', blue, tail] \\
      & \Hilb^\op & \Hilb^\op_\Gamma \arrow[l, tail, blue]   \\
    \end{tikzcd}
\caption{The main categories and functors used in this paper ($ \mathrm{op}$
indicates the use of the opposite category). Arrows with tails are faithful
functors and arrows with two heads in one direction are full. Unlabelled
functors are forgetful. The diagram is not fully commutative (even modulo
natural isomorphisms), but the functors in {\color{blue} blue} form a
commuting subdiagram.}
\label{fig:categories}
\end{figure}
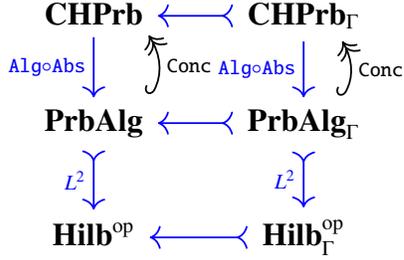

\begin{definition}[Categories \& functors]
\text{}

\begin{itemize}
\item[(i)] We denote by ${\mathbf{CHPrb}}$ the category of compact
Hausdorff spaces equipped with a Baire-Radon probability measure and
measure-preserving continuous functions.

\item[(ii)] We denote by $\mathbf{AbsMbl}$ the category of abstract
measurable spaces which we define as the opposite category of the category
of $\sigma$-complete Boolean algebras and $\sigma$-complete Boolean
homomorphisms.

\item[(iii)] We denote by $\mathbf{PrbAlg}$ the opposite category of the
category of probability algebras and measure-preserving Boolean
homomorphisms. Note that the category $\mathbf{PrbAlg}$ has arbitrary
inverse limits (e.g., see \cite{jt-fund}), a fact which will be
useful for us later.

\item[(iv)] We denote by $\mathbf{Hilb}$ the category of complex Hilbert
spaces and linear isometries.

\item[(v)] Given a (discrete) group $\Gamma$, we can turn a category $ 
\mathcal{C}=\mathbf{Hilb}, \mathbf{PrbAlg}, {\mathbf{CHPrb}} $ into a
dynamical category $\mathcal{C}_\Gamma$ as follows. Given an object $X$ in $ 
\mathcal{C}$, we can associate with $X$ the group $\mathrm{Aut}(X)$ of its
automorphisms in $\mathcal{C}$. The dynamical category $\mathcal{C}_\Gamma$
now consists of pairs $(X,T)$ where $X$ is a $\mathcal{C}$-object and $ 
T\colon \Gamma\to \mathrm{Aut}(X)$ a group homomorphism. A $\mathcal{C}_\Gamma$ 
-morphism is a $\mathcal{C}$-morphism which intertwines with the $\Gamma$ 
-actions.

\item[(vi)] The abstraction functor $\mathtt{Abs}$ maps a concrete
measurable space $(X,\Sigma_X)$ to the $\mathbf{AbsMbl}$-object $\Sigma_X$
and a measurable function $f:(X,\Sigma_X)\to (Y,\Sigma_Y)$ to the $\mathbf{ 
AbsMbl}$-morphism $f^*:\Sigma_X\to \Sigma_Y$ where $f^*$ is the (opposite)
pullback map $f^*(E):=f^{-1}(E)$, $E\in \Sigma_Y$. We apply the $\mathtt{Abs}
$-functor to a concrete probability space $(X,\Sigma_X,\mu)$ and obtain an
abstract probability space $(\Sigma_X,\mu)$. Let $\mathcal{I}_\mu=\{E\in
\Sigma_X: \mu(E)=0\}$ be the ideal of $\mu$-null sets. Then the quotient
Boolean algebra $X_\mu\coloneqq \Sigma_X/\mathcal{I}_\mu$ is $\sigma$ 
-complete. We can lift the measure $\mu$ to $X_\mu$ in a natural way, and by
an abuse of notation, we denote this lift by $\mu$ again. The tuple $ 
(X_\mu,\mu)$ is a probability algebra, and we define $\mathtt{Alg}\circ 
\mathtt{Abs}(X,\Sigma_X,\mu_X):=(X_\mu,\mu)$. If $f\colon
(X,\Sigma_X,\mu)\to (Y,\Sigma_Y,\nu)$ is a measure-preserving function, then
the pullback map $f^*$ maps the ideal $\mathcal{I}_\nu$ to the ideal $ 
\mathcal{I}_\mu$. We obtain a $\mathbf{PrbAlg}$-morphism $\mathtt{Alg}\circ 
\mathtt{Abs}(f)\colon (X_\mu,\mu)\to (Y_\nu,\nu)$.

\item[(vii)] The canonical model functor $\mathtt{Conc}$ reverses the
process described in the previous item. More precisely, if $(X,\mu )$ is a
probability algebra, there exists a ${\mathbf{CHPrb}}$-space $\mathtt{Conc} 
(X,\mu )\coloneqq(Z,\mathcal{B}a(Z),\mu _{Z})$ such that $\mathtt{Alg}\circ 
\mathtt{Abs}(Z,\mathcal{B}a(Z),\mu _{Z})$ is isomorphic to $(X,\mu )$ in $ 
\mathbf{PrbAlg}$. A complete construction of $\mathtt{Conc} 
(X,\mu )$ can be found in \cite{jt-fund}. Given a $\mathbf{PrbAlg}$-morphism $f:(X,\mu )\rightarrow
(Y,\nu )$, we define $\mathtt{Conc}(f):Z_{X}\rightarrow Z_{Y}$ by $\mathtt{ 
Conc}(f)(\theta )=\theta \circ \phi $.

\item[(viii)] Next we define the $L^{2}$-functor. Let $(X,\mu )$ be a $ 
\mathbf{PrbAlg}$-space with canonical model $\mathtt{Conc}(X,\mu )=(Z, 
\mathcal{B}a(Z),\mu _{Z})$ as constructed previously. We define the $L^{2}$ 
-functor on objects by $L^{2}(X,\mu )\coloneqq L^{2}(Z,\mathcal{B}a(Z),\mu
_{Z})$. If $\pi :(X,\mu )\rightarrow (Y,\nu )$ is a $\mathbf{PrbAlg}$ 
-morphism, then $L^{2}(\pi ):L^{2}(Y,\nu )\rightarrow L^{2}(X,\mu )$ is
defined by the Koopman operator $L^{2}(\pi )(f)\coloneqq\pi ^{\ast }f$ where 
$\pi ^{\ast }f\coloneqq f\circ \mathtt{Conc}(\pi )$.

\item[(ix)] Similarly, we define the dynamical version of the functors $ 
\mathtt{Abs}\circ \mathtt{Alg}$, $\mathtt{Conc}$, and $L^{2}$.
\end{itemize}
\end{definition}

A significant feature of the canonical model $\mathtt{Conc}(X)=(Z_X,\mathcal{ 
B}a(Z_X),\mu_{Z_X})$ of a $\mathbf{PrbAlg}$-space $X=(X,\mu)$ is the strong
Lusin property (cf.~\cite[Section 7]{jt-fund}), which states that
the commutative von Neumann algebra $L^\infty(Z_X,\mathcal{B} 
a(Z_X),\mu_{Z_X})$ is isomorphic to the commutative $C^*$-algebra $C(Z_X)$
of continuous functions on $Z_X$ in the category of unital $C^*$-algebras.
In ${\mathbf{CHPrb}}$-spaces with the strong Lusin property every
equivalence class of bounded measurable functions has a continuous
representative. A very useful consequence of this property is that it comes
with a canonical disintegration of measures:

\begin{theorem}
\label{thm-disintegration} Let $\Gamma$ be a discrete group. Let $ 
\pi:(X,\mu,T)\to (Y,\nu,S)$ be a $\mathbf{PrbAlg}_\Gamma$-morphism. Then
there exists a unique Baire--Radon probability measure $\mu_y$ on $Z_{X}$
for each $y \in Z_{Y}$ which depends continuously on $y$ in the vague
topology in the sense that $y \mapsto \int_{Z_{X}} f\ d\mu_y$ is continuous
for every $f$ in the space of continuous functions $C(Z_{X})$, and such that 
\begin{equation}  \label{disint-form}
\int_{Z_{X}} f(x) g(\mathtt{Conc}(\pi)(x))\ d\mu_{Z_{X}}(x) = \int_{Z_{Y}}
\left(\int_{Z_{X}} f\ d\mu_y\right) g\ d\mu_{Z_{Y}}
\end{equation}
for all $f \in C(Z_{X})$, $g \in C(Z_{Y})$. Furthermore, for each $y \in
Z_{Y}$, $\mu_y$ is supported on the compact set $\mathtt{Conc} 
(\pi)^{-1}(\{y\})$, in the sense that $\mu_y(E)=0$ whenever $E$ is a
measurable set disjoint from $\mathtt{Conc}(\pi)^{-1}(\{y\})$. (Note that
this conclusion does \emph{not} require the fibers $\mathtt{Conc} 
(\pi)^{-1}(\{y\})$ to be Baire measurable.) Moreover, we have $ 
\mu_{S^\gamma_{Z_{Y}}(y)}=(T^\gamma_{Z_{X}})^*\mu_y$ for all $y\in Z_{Y}$
and $\gamma\in\Gamma$.
\end{theorem}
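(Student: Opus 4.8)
The plan is to construct the fiber measures $\mu_y$ from conditional expectation, using the strong Lusin property to upgrade almost-everywhere statements into everywhere statements. First I would consider the conditional expectation $\E(\cdot\mid\pi)$ onto the sub-$\sigma$-algebra $\Stone(\pi)^{-1}(\Baire(Z_Y))$ of $\Baire(Z_X)$. For $f\in C(Z_X)\cong L^\infty(Z_X)$, the conditional expectation $\E(f\mid\pi)$ lies in $L^\infty(Z_Y)\cong C(Z_Y)$, so by the strong Lusin property it has a \emph{unique continuous representative}, which I write as $\bar E(f)\in C(Z_Y)$, giving an honest value $\bar E(f)(y)$ for \emph{every} $y\in Z_Y$. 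For fixed $y$ the map $f\mapsto \bar E(f)(y)$ is a positive linear functional on $C(Z_X)$ with $\bar E(1)(y)=1$, so the Riesz--Markov theorem produces a Baire--Radon probability measure $\mu_y$ on $Z_X$ with $\int_{Z_X} f\,d\mu_y=\bar E(f)(y)$.

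With this definition the identity \eqref{disint-form} is immediate from the defining property of conditional expectation for $f\in C(Z_X)$ and $g\in C(Z_Y)$, and the continuity claim is literally the statement that $y\mapsto \bar E(f)(y)$ is continuous, which holds by construction. Uniqueness would follow because the faithfulness of the probability algebra measure forces $\mu_{Z_Y}$ to have full support on $Z_Y$: every nonempty clopen set $\kappa(E)$ has measure $\mu(E)>0$, so any two continuous functions agreeing $\mu_{Z_Y}$-a.e. coincide, and hence \eqref{disint-form} pins down $\int_{Z_X} f\,d\mu_y$ for every $y$ and every $f\in C(Z_X)$.

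For the support statement the key computation is the module property $\E(f\cdot(g\circ\Stone(\pi))\mid\pi)=(\E(f\mid\pi))\cdot(g\circ\Stone(\pi))$; taking $f=1$ gives $\int_{Z_X}(g\circ\Stone(\pi))\,d\mu_y=g(y)$ for all $g\in C(Z_Y)$, that is, $\Stone(\pi)_*\mu_y=\delta_y$. Since $Z_Y$ is totally disconnected, $y$ admits a neighborhood basis of clopen sets $V$; for each such $V$ the set $\Stone(\pi)^{-1}(V)$ is clopen with $\mu_y(\Stone(\pi)^{-1}(V))=1_V(y)=1$, so its clopen complement is $\mu_y$-null and disjoint from $\operatorname{supp}(\mu_y)$. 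Intersecting over all clopen $V\ni y$ yields $\operatorname{supp}(\mu_y)\subseteq\Stone(\pi)^{-1}(\{y\})$, and because the complement of the support is an open null set, every measurable $E$ disjoint from the fiber satisfies $\mu_y(E)=0$. This is exactly the argument that sidesteps the possible non-Baire-measurability of the fibers, since the fiber itself never needs to be measurable.

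Finally, equivariance would come from the equivariance of conditional expectation: the intertwining $\Stone(\pi)\circ T^\gamma_{Z_X}=S^\gamma_{Z_Y}\circ\Stone(\pi)$ together with the measure-preservation of $T^\gamma$ and $S^\gamma$ gives $\bar E(f\circ T^\gamma_{Z_X})=\bar E(f)\circ S^\gamma_{Z_Y}$ as elements of $L^\infty(Z_Y)$; passing to continuous representatives makes this an equality of continuous functions, hence valid at every $y$, which unwinds via $\int_{Z_X} f\,d\mu_{S^\gamma_{Z_Y}(y)}=\bar E(f)(S^\gamma_{Z_Y} y)=\bar E(f\circ T^\gamma_{Z_X})(y)=\int_{Z_X}(f\circ T^\gamma_{Z_X})\,d\mu_y$ to $\mu_{S^\gamma_{Z_Y}(y)}=(T^\gamma_{Z_X})^*\mu_y$ for all $y$. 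I expect the main obstacle to be the support-on-fibers step: producing an \emph{everywhere}-defined, not merely almost-everywhere, disintegration concentrated on genuinely non-measurable fibers, which is precisely where the totally disconnected Stone structure and the strong Lusin property must be used in tandem rather than the classical disintegration machinery.
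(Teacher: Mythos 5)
The paper does not actually prove Theorem \ref{thm-disintegration}; it is imported from the foundational reference \cite{jt-foundational} (cf.\ the citation to Section 7 there), so there is no in-paper proof to compare against. Your reconstruction follows essentially the same route as that reference: define $\mu_y$ by applying Riesz--Markov to the positive normalized functional $f\mapsto \bar E(f)(y)$, where $\bar E(f)$ is the continuous representative of $\E(f\mid\pi)$ supplied by the strong Lusin property; deduce \eqref{disint-form} and continuity by construction; get uniqueness from the full support of $\mu_{Z_Y}$ (faithfulness of $\mu$ on clopen sets); get the fiber support from $\Stone(\pi)_*\mu_y=\delta_y$ together with the clopen neighborhood basis of $y$; and get equivariance by transporting the conditional-expectation intertwining relation through continuous representatives. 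All of this is correct. The one place where you are slightly loose is the final step of the support argument: ``the complement of the support is an open null set'' is not automatic for a Baire measure, since that complement is an uncountable union of null clopen sets and need not even be Baire measurable. The repair is standard and is what the cited proof does: a Baire measure on a compact Hausdorff space is inner regular with respect to compact (Baire) subsets, and any compact $K\subseteq E\subseteq\bigcup_{V\ni y}\Stone(\pi)^{-1}(Z_Y\setminus V)$ is covered by finitely many of these null clopen sets, whence $\mu_y(K)=0$ and so $\mu_y(E)=0$. With that sentence added your argument is complete.
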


Let $\pi:(X,\mu,T)\to (Y,\nu,S)$ be a $\mathbf{PrbAlg}_\Gamma$-morphism and
let $\mathtt{Conc}(\pi):Z_{X}\to Z_{Y}$ be its canonical representation. For
every $f\in L^2(\mathtt{Conc}(Y))$, the composition $\mathtt{Conc}(\pi)^* f$
is an element of $L^2(\mathtt{Conc}(X))$ since $\mathtt{Conc}(\pi)$ is
measure-preserving. In fact, $\{\mathtt{Conc}(\pi)^* f: f\in L^2(\mathtt{Conc 
}(Y))\}$ is a closed $\Gamma$-invariant subspace of $L^2(\mathtt{Conc}(X))$.
Thus we can identify $L^2(\mathtt{Conc}(Y))$ with the closed subspace $ 
\mathtt{Conc}(\pi)^*(L^2(\mathtt{Conc}(Y)))$ in $L^2(\mathtt{Conc}(X))$.
Using this identification, we can define a conditional expectation operator $ 
\mathbb{E}(\cdot|Y)$ from $L^2(\mathtt{Conc}(X))$ to $L^2(\mathtt{Conc}(Y))$ 
. Since $L^\infty$ is dense in $L^2$ in the $L^2$ topology and by Theorem  
\ref{thm-disintegration}, we obtain the disintegration of measures 
\begin{equation*}
\mathbb{E}(f|Y)(y)=\int_{Z_{X}} f d\mu_y
\end{equation*}
almost surely for all $f\in L^2(\mathtt{Conc}(X))$.

An important application is a canonical construction of relatively
independent products. Indeed, let $\pi _{1}:(X_{1},\mu
_{1},T_{1})\rightarrow (Y,\nu ,S)$, $\pi _{2}:(X_{2},\mu
_{2},T_{2})\rightarrow (Y,\nu ,S)$ be two $\mathbf{PrbAlg}_{\Gamma }$ 
-morphisms. Let $(\mu _{y}^{1})_{y\in Z_{Y}}$ and $(\mu _{y}^{2})_{y\in
Z_{Y}}$ be the corresponding canonical disintegration of measures. Define
the probability measure 
\begin{equation*}
\mu _{Z_{X_{1}}}\times _{Z_{Y}}\mu _{Z_{X_{2}}}(E):=\int_{Z_{Y}}\mu
_{y}^{1}\times \mu _{y}^{2}(E)d\mu _{Z_{Y}}
\end{equation*} 
for all $E\in \mathcal{B}a(Z_{X_{1}})\otimes \mathcal{B}a(Z_{X_{2}})$. Then 
\begin{equation*}
(Z_{X_{1}}\times Z_{X_{2}},\mathcal{B}a(Z_{X_{1}})\otimes \mathcal{B} 
a(Z_{X_{2}}),\mu _{Z_{X_{1}}}\times _{Z_{Y}}\mu
_{Z_{X_{2}}},T_{Z_{X_{1}}}\times T_{Z_{X_{2}}})
\end{equation*} 
is a ${\mathbf{CHPrb}}_{\Gamma }$-object coming with two ${\mathbf{CHPrb}} 
_{\Gamma }$-morphisms $\psi _{1}:Z_{X_{1}}\times Z_{X_{2}}\rightarrow
Z_{X_{1}}$ and $\psi _{2}:Z_{X_{1}}\times Z_{X_{2}}\rightarrow Z_{X_{2}}$.
Applying the functor $\mathtt{Alg}\circ \mathtt{Abs}$, we obtain a $\mathbf{ 
PrbAlg}_{\Gamma }$-object $(X_{1}\times _{Y}X_{2},\mu _{1}\times _{Y}\mu
_{2},T_{1}\times T_{2})$ and the two $\mathbf{PrbAlg}_{\Gamma }$-morphisms $ 
\mathtt{Alg}\circ \mathtt{Abs}(\psi _{1}),\mathtt{Alg}\circ \mathtt{Abs} 
(\psi _{2})$ satisfying the following commutative diagram in $\mathbf{PrbAlg} 
_{\Gamma }$: 
\begin{equation*}
\begin{tikzcd} & {X_1\times_Y X_2} \\ {X_1} && {X_2} \\ & {Y}
\arrow["{\mathtt{Alg}\circ\mathtt{Abs}(\psi_1)}" description, from=1-2,
to=2-1] \arrow["{\mathtt{Alg}\circ \mathtt{Abs}(\psi_2)}" description,
from=1-2, to=2-3] \arrow["{\pi_1}" description, from=2-1, to=3-2]
\arrow["{\pi_2}" description, from=2-3, to=3-2] \end{tikzcd}
\end{equation*}
\subsection{Ultrafilters and nonstandard analysis}
A filter on a set $X$ is a non-empty collection $f$ of subsets of $X$ satisfying the following properties.
\begin{itemize}
    \item[(i)] $\emptyset \notin f$.
    \item[(ii)] If $A,B\in f$, then $A\cap B\in f$.
    \item[(iii)] If $A\in f$, $B\subset X$, and $A\subset B$, then $B\in f$.
\end{itemize}
An ultrafilter on $X$ is a maximal element in the set of filters on $X$ with respect to set inclusion. A \emph{non-principal ultrafilter} is an ultrafilter such that none of its elements is finite.

We are concerned with non-principal ultrafilters on the set of natural numbers $\mathbb{N}$.
The \emph{Fr\'echet filter} consists of all subsets $A\subset \mathbb{N}$ for which there is $n\in \mathbb{N}$ such that $A$ contains the tail $\{n,n+1,\ldots\}$. By definition, the Fr\'echet filter is contained in any non-principal ultrafilter on $\mathbb{N}$.

Fix a non-principal ultrafilter $p$ on $\mathbb{N}$. The \emph{ultraproduct} of a sequence $\{X_n\}$ of sets with respect to $p$ is the quotient set $X^*=\prod_{n\to p} X_n$ of the Cartesian product $\prod_{n\in \mathbb{N}} X_n$ with respect to the equivalence relation $(x_n)\sim (y_n)$ if and only if $\{n\colon x_n=y_n\}\in p$. The equivalence class of $(x_n)$ is denoted by $\lim_{n\to p} x_n$, and called an \emph{ultralimit} of the elements $x_n$. 

A subset $A$ of $X^*$ is said to be \emph{internal} if it is of the form $A=\prod_{n\to p} A_n$ for some subset $A_n\subset X_n$ for each $n$. One can check that the collection of internal subsets of $X^*$ forms an algebra of sets, in particular $\bigcup_{i=1}^K A_i= \prod_{n\to p} \bigcup_{i=1}^K A_{i,n}$ for finitely many internal subsets $A_i$ of $X^*$. 

One can also verify that the ultraproduct of a sequence of groups is a group. Moreover, a class $\mathfrak{G}$ of groups is said to be \emph{uniformly amenable} if the ultraproduct group $\Gamma^*$ is amenable for any sequence of groups $\Gamma_n\in\mathfrak{G}$. 

Let $(r_n)$ be a bounded sequence of real numbers. By the Bolzano--Weierstrass theorem, there is a unique real number $r$ such that $\lim_{n\to p} (r_n - r)$ is infinitesimal, that is an ultralimit real number in the equivalence class of a null sequence. We define the standard part function $\mathrm{st}(\lim_{n\to p}r_n)\coloneqq r$.

\section{Proof of Theorem \ref{thm-strength}}\label{sec-proof1}

We prove Theorem \ref{thm-strength} via an ultralimit construction from Theorem \ref{thm-main1}. The following simple lemma is key. Throughout, fix a non-principal ultrafilter $p$ on $\mathbb{N}$. 
\begin{lemma}\label{k-synd}
Let $\{\Gamma_n\}$ be a sequence of groups and let $\Gamma^*=\prod_{n\to p}\Gamma_n$ be their ultraproduct.  
Let $S=\prod_{n\to p} S_n$ be an internal subset of $\Gamma^*$ which is a $K$-syndetic subset of $\Gamma^*$ for some $K\geq 1$.
Then $$\{n\colon S_n \text{ is a $K$-syndetic set in $\Gamma_n$} \}\in p.$$  
\end{lemma}

\begin{proof}
    There are $\gamma_1,\ldots,\gamma_K\in \Gamma$ such that $\Gamma^*=\bigcup_{i=1}^K \gamma_i \cdot S$. 
    We have $\gamma_i=\lim_{n\to p} \gamma_{i,n}$ for some choice of $\gamma_{i,n}\in \Gamma_n$ for each $n$ and every $1\leq i\leq K$ such that  
    $$\Gamma^* = \bigcup_{i=1}^K \gamma_i\cdot S = \prod_{n\to p} \bigcup_{i=1}^K \gamma_{i,n}\cdot S_n,$$
    and thus $\{n\colon \bigcup_{i=1}^K \gamma_{i,n}\cdot S_n=\Gamma_n\}\in p$. 
\end{proof}

At the end of this section, we show how to derive Theorem \ref{thm-strength} from the following generalization of it. 

\begin{theorem}\label{thm-strength1}
For every uniformly amenable class $\mathfrak{G}$ of groups, for all integer $d\geq 1$, and any $\varepsilon>0$ there exist an integer $K\geq 1$ and $\delta>0$ (only
depending on $\mathfrak{G},\varepsilon,d$) such that for any group $\Gamma\in\mathfrak{G}$, any probability space $(X,\mu)$, every finitely many pairwise commuting measure-preserving actions $T_i\colon \Gamma\acts(X,\mu)$, $i=1,\ldots,d$, and every measurable set $E$ in $X$ with $\mu(E)\geq \varepsilon$ it holds that 
\begin{equation*}
\left\{ \gamma \in \Gamma \colon \mu
(T_{1}^{\gamma ^{-1}}(E)\cap \left( T_{[1,2]}^{\gamma }\right) ^{-1}(E)\cap
\cdots \cap \left( T_{[1,d]}^{\gamma }\right)^{-1}(E))\geq \delta \right\}
\end{equation*}
is $K$-syndetic, where $T_{[a,b]}^{\gamma }:=T_{a}^{\gamma }\circ T_{a+1}^{\gamma }\circ \cdots \circ T_{b}^{\gamma}$.
\end{theorem}

\begin{proof}
Towards a contradiction, assume there is a uniformly amenable class of groups $\mathfrak{G}$, $d\geq 1, \varepsilon >0$ such that for every $n\geq 1$ there is a group $\Gamma_n\in\mathfrak{G}$, $d$ pairwise commuting measure-preserving $\Gamma_n$-actions $T_{n,1},\ldots, T_{n,d}$ on a probability space $(X_n, \mathcal{X}_n,\mu_n)$ and $E_{n}\in \mathcal{X}_n$ with $\mu _{n}(E_{n})\geq \varepsilon $ such that 
\begin{equation}
A_n\coloneqq \left\{ \gamma \in \Gamma_n \colon \mu
(T_{n,1}^{\gamma ^{-1}}(E_{n})\cap \left( T_{n,[1,2]}^{\gamma }\right)
^{-1}(E_{n})\cap \cdots \cap \left( T_{n,[1,d]}^{\gamma }\right)
^{-1}(E_{n})\geq 1/n\right\}\label{eq-needlater}
\end{equation} 
is not $n$-syndetic.

Let $X^*=\prod_{n\to p} X_n$ be the ultraproduct of the sets $X_n$ and denote by 
\begin{equation*}
\mathcal{A}=\left\{\prod_{n\to p} D_n: (D_n)\in \prod_{n\in\mathbb{N}} 
\mathcal{X}_n\right\},
\end{equation*}
the algebra of internal subsets of $X^*$. We define the Loeb premeasure 
\begin{equation*}
\mu^*:\mathcal{A}\to [0,1], \quad \mu^*\left(\prod_{n\to p} D_n\right):= 
\mathrm{st}\left(\lim_{n\to p} \mu_n(D_n)\right).
\end{equation*}
By Carath\'eodory's extension and uniqueness theorem, $\mu^*$ extends to a unique countably additive probability measure $\mu$ on the $\sigma$-algebra of sets generated by $\mathcal{A}$. By a slight abuse of notation, we let $(X_\mu,\mu)$ denote the probability algebra associated to $(X^*,\sigma(\mathcal{A}),\mu)$.

We let $\Gamma^*=\prod_{n\rightarrow p}\Gamma_n$. Since $\mathfrak{G}$ is uniformly amenable, $\Gamma^*$ is an amenable group. For each $i=1,\ldots,d$, and for every $\gamma^*=\lim_{n\to p} \gamma_n \in
\Gamma^*$ and $\prod_{n\to p} D_n \in\mathcal{A}$, define 
\begin{equation*}
(T^*_i)^{\gamma^*}\left(\prod_{n\to p} D_n\right)\coloneqq  
\prod_{n\to p} T^{\gamma_n}_{n,i}(D_n)
\end{equation*}
One checks that $(T^*_i)^{\gamma^*}$ is a well-defined $\Gamma^*$-action by Boolean automorphism of $\mathcal{A}$ which preserves the probability measure $\mu$. Since $\mu$ is a finite measure, by a standard approximation result in measure theory (see, e.g., \cite[Theorem 5.7]{bauer-measure}), for any $D\in X_\mu$ there is a sequence $D_n\in\mathcal{A}$ such that $\mu(D\Delta D_n)\to 0$ as $n$ tends to $\infty$, where $\Delta$ denotes symmetric set difference. Thus, we
can extend actions $T^*_i$ to abstract $\mathbf{PrbAlg}_\Gamma$-actions 
$T_i:\Gamma^*\to \mathrm{Aut}(X_\mu,\mu)$. We obtain an abstract $(\Gamma^*)^d$-system $(X_\mu,\mu,T)$.

By construction, we have $\mu (E^{\ast })\geq \varepsilon $ where $E^{\ast
}:=\prod_{n\rightarrow p}E_{n}$ and the $E_{n}$ are as in \eqref{eq-needlater} 
. By Theorem \ref{thm-main1}, there exist $\delta >0$ and $K\geq 1$ such that 
$$
B = \left\{ \gamma \in \Gamma ^{\ast }\colon \mu
((T_{1}^{\ast })^{\gamma ^{-1}}(E^{\ast })\cap (T_{[1,2]}^{\ast })^{\gamma
^{-1}}(E^{\ast })\cap \cdots \cap (T_{[1,d]}^{\ast })^{\gamma ^{-1}}(E^{\ast
}))\geq \delta \right\}$$ 
is $K$-syndetic. 
Let 
\begin{equation*}
B_{n}=\{\gamma \in \Gamma_n \colon \mu _{n}(T_{n,1}^{\gamma ^{-1}}(E_{n})\cap
(T_{n,[1,2]}^{\gamma })^{^{-1}}(E_{n})\cap \cdots \cap (T_{n,[1,d]}^{\gamma
})^{^{-1}}(E_{n}))\geq \delta \}.
\end{equation*} 
By construction, we have $B=\prod_{n\to p} B_n$.  
Then Lemma \ref{k-synd} gives $$\{n \colon B_n \text{ is $K$-syndetic in $\Gamma_n$}\}\in p$$
Moreover, $B_n\subset A_n$ as long as $n>\frac{1}{\delta}$, and since the Fr\'{e}chet filter is
contained in any non-principal ultrafilter, we must have $\{n\colon B_n\subset A_n\}\in p$. 
Since a filter is intersection closed and $p$ does not contain finite sets as a non-principal ultrafilter, there are infinitely many $A_n$ which are $K$-syndetic, contradicting our assumptions.  
\end{proof}
As a corollary, we obtain Theorem \ref{thm-strength}: 
\begin{proof}
By Theorem \ref{thm-strength1}, it suffices to verify that the class of solvable groups of derived length at most $l$ is uniformly amenable for some fixed $l\geq 1$. 

Let $\Gamma^*$ be the ultraproduct of a sequence $\{\Gamma_n\}$ of solvable groups of derived length at most $l$. We claim that $\Gamma^*$ is solvable (and amenable in particular). The direct product $\prod_{n}\Gamma_n$ is a solvable group as $\{\Gamma_n\}$ has uniformly bounded length. The ultraproduct $\Gamma^*=\prod_{n\rightarrow p}\Gamma_n$ is a quotient group of $\prod_{n}\Gamma_n$ so $\Gamma^*$ is solvable as well.  
\end{proof}

\section{Proof of Theorem \ref{thm-main1}}\label{sec-austin}

Austin \cite{austin2016nonconventional} proved a subcase of Theorem \ref{thm-main1} when the acting group $\Gamma$ is countable and the space $(X,\mu)$ is standard Lebesgue by constructing characteristic spaces on stated extensions. Our proof of Theorem \ref{thm-main1} aims to facilitate Austin's proof \cite{austin2016nonconventional} to be carried out in a setup where spaces may not be standard Lebesgue and groups may not be countable. We will reuse most of the arguments in Austin \cite{austin2016nonconventional} and only modify the steps in which the assumptions about the space and the group are substantially used. We will now utilize the notation introduced in Section \ref{sec:category}.

Throughout this section, we let $\Gamma$ be a (discrete) amenable group and $(\Phi_\kappa)$ be a left Følner net for $\Gamma$. Let $(X,\mu)$ be a $\OpProbAlg$-space and $T_{1},T_{2},\ldots ,T_{d}$ be commuting group homomorphisms $T_{i}\colon \Gamma \rightarrow \mathrm{Aut}(X,\mu)$, that is $T_{i}^{\gamma }\circ T_{j}^{\eta }=T_{j}^{\eta }\circ T_{i}^{\gamma }$ for all $\gamma ,\eta \in \Gamma$ and $1\leq i<j\leq d$ (the automorphism group $\mathrm{Aut}(X,\mu)$ is taken in the category $\mathbf{PrbAlg}$). We denote by $(\tilde{X},\tilde{\mu},\tilde{T})$ the corresponding canonical model, so that $\tilde{X}$ is a compact Hausdorff space and each $\tilde{T}_{i}$ acts on $(\tilde{X},\tilde{\mu})$ by measure-preserving homeomorphisms. This canonical model has the advantage that tools such as disintegration of measure and relative independent product are available as discussed in Section \ref{sec:category}. For the most part, we can then adopt Austin's arguments on the concrete space $(\tilde{X},\tilde{\mu},\tilde{T})$. Then the corresponding results for the probability algebra $(X,\mu)$ follow by applying the functor $\mathtt{Alg}\circ \mathtt{Abs}$.

We begin with Zorin-Kranich's convergence theorem \cite{zorin2016norm} which
holds in the generality that we have just set up. 

\begin{theorem}[Zorin-Kranich's convergence theorem]
\label{thm-convergence} Let $f_{1},f_{2},\ldots ,f_{d}\in L^{\infty }(X,\mu )
$. Then the limit 
\begin{equation*}
\lim_{\kappa}\frac{1}{|\Phi _{\kappa}|}\sum_{\gamma \in \Phi
_{\kappa}}\prod_{i=1}^{d}f_{i}\circ T_{[1,i]}^{\gamma }
\end{equation*}%
exists in $L^{2}(X,\mu )$ and is independent of the choice of the (left) F\o %
lner net $(\Phi _{\kappa})$.
\end{theorem}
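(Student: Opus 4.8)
The plan is to strip the statement down to a purely Hilbert-space assertion about commuting unitary representations and then run the functional-analytic induction of Walsh and Zorin-Kranich, checking at each step that neither countability of $\G$ nor separability of $(X,\mu)$ is ever used. First I would pass to the Koopman picture afforded by the canonical model: writing $H\coloneqq L^2(X,\mu)=L^2(\Stone(X,\mu))$ and $U_i^\g\coloneqq (T_i^\g)^*$, the hypotheses give $d$ commuting unitary representations $U_1,\ldots,U_d\colon \G\to\mathcal{U}(H)$, and the averages to be studied are
\[
A_i\coloneqq \frac{1}{|\Phi_i|}\sum_{\g\in\Phi_i}\prod_{j=1}^d (U_j^\g\circ\cdots\circ U_1^\g) f_j ,
\]
the product being pointwise multiplication of $L^\infty$ functions. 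Since $H$ is complete, it suffices to prove that $(A_i)$ is a Cauchy net. The point of this reduction is that, from here on, only the Hilbert space structure, the commuting unitary actions, and the left Følner property of $(\Phi_i)$ enter; the inseparability of $H$ is harmless.

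The core is an induction on Walsh's complexity measure attached to the tuple of ``group polynomials'' $\g\mapsto (U_j^\g\circ\cdots\circ U_1^\g)_{j=1}^d$. The base case $d=1$ is the mean ergodic theorem for amenable group actions (Alaoglu--Birkhoff/Dye): $\frac{1}{|\Phi_i|}\sum_{\g\in\Phi_i}U_1^\g f_1$ converges to the orthogonal projection of $f_1$ onto the $U_1$-invariant subspace, a limit that is manifestly net-independent because it is an orthogonal projection. For the inductive step I would apply a van der Corput inequality in the amenable form: for a bounded family $(v_\g)_{\g\in\G}$ in $H$,
\[
\limsup_i\Big\|\frac{1}{|\Phi_i|}\sum_{\g\in\Phi_i}v_\g\Big\|^2
\;\le\;\limsup_{\Psi}\frac{1}{|\Psi|^2}\sum_{h,h'\in\Psi}\limsup_i\frac{1}{|\Phi_i|}\sum_{\g\in\Phi_i}\langle v_{h\g},v_{h'\g}\rangle ,
\]
with $\Psi$ ranging over the Følner net. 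Applied to $v_\g=\prod_{j}(U_j^\g\circ\cdots\circ U_1^\g) f_j$, expanding the inner product and using commutativity produces an average of products of strictly smaller complexity in Walsh's ordering (after differencing, the innermost factor becomes invariant-like), so the inductive hypothesis applies. Iterating shows $\|A_i-A_{i'}\|\to 0$, i.e. $(A_i)$ is Cauchy and hence convergent.

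For independence of the Følner net I would argue that the limit is characterized intrinsically rather than net-dependently: each differencing step expresses the limit through orthogonal projections onto characteristic subspaces and through lower-order limits that are already net-independent by induction, so the top limit inherits this. Equivalently, given two left Følner nets one interleaves them into a single directed family and applies the Cauchy argument to the combined net, forcing the two limits to coincide.

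The step I expect to be the main obstacle is verifying that the van der Corput estimate and Walsh's complexity-reduction bookkeeping, both originally formulated for Følner \emph{sequences} in \emph{countable} groups acting on \emph{separable} spaces, transfer verbatim to Følner \emph{nets} over an uncountable $\G$ on a possibly inseparable $H$. The delicate points are (i) that the iterated $\limsup$ over the net in van der Corput is legitimate and that the left Følner condition still absorbs the translations $\g\mapsto h\g$ with asymptotically vanishing error when the index set is merely a directed set, and (ii) that Walsh's complexity measure remains well-founded so the induction terminates. None of this uses cardinality, but each item must be re-checked net-theoretically rather than simply quoted; invoking the canonical model at the outset is precisely what makes $L^2$ and the Koopman operators well-defined so that the whole functional-analytic scheme even makes sense in the inseparable regime.
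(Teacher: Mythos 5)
First, note that the paper does not prove Theorem \ref{thm-convergence} at all: it is quoted as is from Zorin-Kranich \cite{zorin2016norm}, whose theorem is already stated for arbitrary amenable groups, left F\o lner nets, and arbitrary (not necessarily separable) probability algebras, so no adaptation is required. Your plan is therefore not a variant of the paper's argument but an attempt to reprove the cited result, and as such it contains a genuine gap at its core.

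The gap is in the inductive step. Applying van der Corput to $v_\g=\prod_{j=1}^d(T_1^\g\circ\cdots\circ T_j^\g)^*f_j$ and expanding $\langle v_{h\g},v_{h'\g}\rangle$ does \emph{not} produce an average of the same shape with strictly fewer terms or with an ``invariant-like'' innermost factor: for commuting actions the resulting expression carries the group elements $h\g$ and $h'\g$ attached to different transformations simultaneously, i.e.\ it is an average of a \emph{longer} product governed by a \emph{larger} family of group polynomials. This is precisely why norm convergence for commuting transformations resisted the naive van der Corput induction and required Tao's finitary argument \cite{tao2008norm} and then Walsh's scheme \cite{walsh2012norm}: Walsh does not induct on a complexity that decreases under differencing, but proves a \emph{metastability} statement by a double induction over a bespoke well-founded partial order on finite ``systems'' of group polynomials, in which the van der Corput step replaces a system by finitely many systems that precede it in that order while having \emph{more} functions. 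Your sketch collapses this into a one-line reduction that would already fail for $d=2$ commuting $\Z$-actions. Relatedly, your first proposed route to net-independence (``the limit is expressed through orthogonal projections onto characteristic subspaces'') is inconsistent with the method: as the paper itself emphasizes, the Walsh--Zorin-Kranich argument yields no structural description of the limit whatsoever; only your second route (interleaving two F\o lner nets into a single directed family and reusing the Cauchy estimate) is viable. Your concerns (i)--(ii) about passing from F\o lner sequences to nets over an uncountable group are legitimate, but resolving them is exactly the content of \cite{zorin2016norm}; the economical and correct proof here is simply to invoke that reference, as the paper does.
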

This is exactly claim \eqref{eq-lim} in Theorem \ref{thm-main1}.
It remains to prove claim \eqref{eq-mr} in the same theorem. This is
achieved as follows. Consider the set of $d$-fold couplings on $\tilde{X}^{d}$, which is the collection of Baire probability measures on $(\tilde{X}^{d},\mathcal{B}a(\tilde{X}^{d}))$ all of whose coordinate projections are $\tilde{\mu}$. By Theorem \ref{thm-convergence}, for each $x\in \tilde{X}$, the averages 
\begin{equation*}
\frac{1}{|\Phi _{\kappa}|}\sum_{\gamma \in \Phi _{\kappa}}\delta _{\tilde{T}%
_{1}^{\gamma }x,\tilde{T}_{[1,2]}^{\gamma }x,\ldots ,\tilde{T}%
_{[1,d]}^{\gamma }x}
\end{equation*}%
converge weakly to a measure $\lambda$ in the set of $d$-fold couplings. This weak convergence implies that 
\begin{equation*}
\frac{1}{|\Phi _{\kappa}|}\sum_{\gamma \in \Phi _{\kappa}}\tilde{\mu}\left(\tilde{T}%
_{1}^{\gamma ^{-1}}(E)\cap \tilde{T}_{[1,2]}^{\gamma ^{-1}}(E)\cap \ldots
\cap \tilde{T}_{[1,d]}^{\gamma ^{-1}}(E)\right)\rightarrow \lambda (E^{d})
\end{equation*}%
for all $E\in \mathcal{B}a(\tilde{X})$. To establish multiple
recurrence, it suffices to show that 
\begin{equation}
\lambda (E_{1}\times E_{2}\times \cdots \times E_{d})=0\Rightarrow 
\tilde{\mu}(E_{1}\cap E_{2}\cap \ldots \cap E_{d})=0.  \label{removal lemma}
\end{equation}
Austin first introduces the notion of satedness and proves that any space has an extension that is sated. It's then not a loss of generality to assume that $(\tilde{X},\tilde{\mu})$ is a sated space in the first place. The advantage of a sated space is that it constrains how a certain relevant $\sigma $-subalgebra of the space is lifted to any of its extensions. More about satedness can be found in Section \ref{sec:sated}.

Define 
\begin{eqnarray*}
H_{i,j} &:&=\{\mathbf{\gamma}\in \Gamma^{d}:\gamma_{i+1}=\gamma_{i+2}=\cdots =\gamma_{j}\} \\
L_{i,j} &:&=\{\mathbf{\gamma}\in H_{i,j}:\gamma_{l}=1\text{ for all }l\notin
(i,j]\}
\end{eqnarray*}
Austin then constructs recursively a tower of $\Gamma ^{d}$-spaces, as a variant
of Host--Kra self-joinings \cite{host-commuting,host2005nonconventional}: 
\[
(\tilde{Y}^{(d)},\tilde{\nu}^{(d)},S^{(d)})\rightarrow (\tilde{Y}^{(d-1)},%
\tilde{\nu}^{(d-1)},\tilde{S}^{(d-1)})\rightarrow \cdots \rightarrow (\tilde{%
Y}^{(0)},\tilde{\nu}^{(0)},\tilde{S}^{(0)})=(\tilde{X},\tilde{\mu},\tilde{T}%
).
\]%
Assume that the tower has already been constructed up to some $j\leq d-1$.
Define an $H_{d-j-1,d}$-action $\tilde{R}^{(j)}$ on $(\tilde{Y}^{(j)},%
\tilde{\nu}^{(j)})$ by setting%
\[
\tilde{R}_{i}^{(j)}=\left\{ 
\begin{array}{cc}
\tilde{S}_{i}^{(j)} & \text{for }i<d-j-1 \\ 
\tilde{S}_{[d-j-1,d]}^{(j)} & i=d-j-1 \\ 
\text{id} & i=[d-j,d-1]%
\end{array}%
\right. .
\]
An $H_{d-j-1,d}$-space is then constructed by the relative product 
\[
\left( \tilde{Z}^{(j+1)},\tilde{\theta}^{(j+1)},\tilde{R}^{(j+1)}\right) =(%
\tilde{Y}^{(j)}\times \tilde{Y}^{(j)},\tilde{\nu}^{(j)}\otimes _{\Sigma _{%
\tilde{Y}^{(j)}}^{L_{d-j-1,d}}}\tilde{\nu}^{(j)},(\tilde{S}%
^{(j)})_{ H_{d-j-1,d}}\times \tilde{R}^{(j)}),
\]
where for a subgroup $H$ of $\Gamma^d$, we let $(\tilde{S})_H$ be the restriction of an action $\tilde{S}$ by $\Gamma^d$ to an action by $H$, and for a subgroup $L$ of $\Gamma^d$, we denote by $\Sigma_{\tilde{Y}}^L$ the $\sigma$-subalgebra of $\Sigma_{\tilde Y}$ of invariant sets with respect to the restriction of an action $\tilde{S}$ by $\Gamma^d$ to an action by $L$.

Finally, a \textit{lifting lemma} proves the existence of a $\Gamma^d$-space extension 
$$\pi:(\tilde{Y}^{(j+1)},\tilde{%
\nu}^{(j+1)},\tilde{S}^{(j+1)})\rightarrow (\tilde{Y}^{(j)},\tilde{%
\nu}^{(j)},\tilde{S}^{(j)}) $$ which admits a commutative diagram of $H_{d-j-1,d}$-spaces
\begin{equation*}
\begin{tikzcd} & \tilde{Z}^{(j+1)} \arrow{dr}{} \\ \left(\tilde{Y}^{(j+1)}\right)_{ H_{\{d-j-1,d\}}}\arrow{ur}{}
\arrow{rr}{\pi} && \left(\tilde{Y}^{(j)}\right)_{ H_{\{d-j-1,d\}}} \end{tikzcd},
\end{equation*}
where $Y_H$ is the $H$-space with the same probability space but with the action restricted to $H$.

The advantage of the sequence of extensions is a variant of Host--Kra inequality: the asymptotic behavior of the ergodic average of $f_{i}$s (the term inside the limit of (\ref{eq-lim})) is governed by an integral of a product of those $f_{i}$s lifted to $\tilde{Y}^{(d)}$.

These Host--Kra-like self-joinings admit characteristic subspaces. 
A closed subspace $V\leq L^{2}(\tilde{\mu})$ is \textbf{partially
characteristic in position} $i$ if the ergodic averages of $f_{1},\ldots,f_{d}$
are asymptotically the same as those of $%
f_{1},\ldots,f_{i-1},P^{V}f_{i},f_{i+1},\ldots,f_{d}$, where $P^V$ denotes the orthogonal projection onto the space $V$. In a sated space $(\tilde{X},\tilde{\mu})$, it can be proven that 
\[
L^{2}\left( \tilde{\mu} \bigg|\bigvee\limits_{l=0}^{i-1}\Sigma^{\tilde{T}_{(l;i]}}_{\tilde{X}}\vee
\bigvee\limits_{l=i+1}^{d}\Sigma ^{\tilde{T}_{(i;l]}}_{\tilde{X}}\right) 
\]%
is partially characteristic in position $i$. The significance of these characteristic subspaces is that $P^Vf_d$ can be approximated by a finite sum of products of the form $h_0h_1\ldots h_{d-1}$, where each $h_i$ is $\Sigma^{\tilde{T}_{(i;d]}}_{\tilde{X}}$-measurable. This then allows us to reduce an ergodic average of $d$ functions to an ergodic average of $d-1$ functions.

Satedness helps to prove that some spaces related to the characteristic subspaces are relatively orthogonal. As an illustrative example, let's say we want to prove that $L^{2}(\tilde{\mu}|\Phi _{1})$ and $L^{2}(\tilde{\mu}|\Phi _{2})$ are relatively independent over $V(\tilde{X})$, where $\Phi _{1},\Phi _{2}$ are $\sigma $-algebras over $\tilde{X}$ and $V(\cdot)$ is a functorial $L^2$-subspace ($V(\tilde{Z})$ is an $L^2$-subspace of $L^2(\tilde{Z})$; see Definition \ref{def:sated} for details). We assume that $\tilde X$ is a $\Psi $-sated space.

Let $f\in L^{2}(\tilde{\mu}|\Phi _{1})$ and $g\in L^{2}(\tilde{\mu}|\Phi _{2})$. We construct a relative product measure 
\[
\left( \tilde{Y},\tilde{\nu}\right) =(\tilde{X}^{2},\tilde{\mu}\otimes_{\Phi _{1}}\tilde{\mu})
\]
and carefully define a $\Gamma $-action on the space. Let $\beta_1$ and $\beta_2$ be the projections of $\tilde{Y}=\tilde{X}^2$ onto the first and second coordinate respectively. 
Since $f$ is $\Phi _{1}$-measurable, we have
\begin{equation}
\int_{\tilde{X}}fgd\tilde{\mu}=\int_{\tilde{Y}}(f\circ \beta _{2})(g\circ \beta _{2})d\tilde{\nu}=\int_{\tilde{Y}}(f\circ \beta _{1})(g\circ \beta _{2})d\tilde{\nu}  \label{eqn:relprod}
\end{equation}

Then we use $V$-satedness of $\tilde{X}$, which gives that $L^{2}(\tilde{\mu})\circ \beta _{1}$ and $V(\tilde{Y})$ are relatively orthogonal over $V(\tilde{X})\circ \beta _{1}$. We will need $g\circ \beta _{2}\in V(\tilde{Y})$, so (\ref{eqn:relprod}) equals
\[
\int_{\tilde{Y}}(P^{V(\tilde{X})}f\circ \beta _{1})\cdot (g\circ \beta _{2})d\tilde{\nu}
\]
By the same line of reasoning as in (\ref{eqn:relprod}), we have this equals
\[
\int_{\tilde{X}}P^{V(\tilde{X})}f\cdot gd\tilde{\mu}=\int_{\tilde{X}}P^{V(\tilde{X})}f\cdot P^{V(\tilde{X})}gd\tilde{\mu}
\]
as desired. 

From here, Austin's ergodic version of Tao's Removal Lemma \cite{tao-removal} yields \eqref{removal lemma}.

The modifications we need to extend these arguments to our uncountable setup are:

\begin{itemize}
\item[(i)] Construction of sated extensions for $\OpProbAlg$-spaces.

\item[(ii)] Lifting lemma: extending a factor map relative to a subgroup to the whole group for $\OpProbAlg$-spaces and for uncountable groups.
\end{itemize}

These modifications are carried out in the following two subsections.

\subsection{$\mathbf{PrbAlg}_\Gamma$-sated extensions}\label{sec:sated}

In this section, we verify that probability algebra dynamical systems admit
sated extensions. 

Following the standard notation, if $\pi:(Y,\nu,S)\rightarrow (X,\mu,T)$ is an extension, we let ${\pi}^* f:=f\circ \pi$ on $L^2(X)$.  If $\mathcal{H}$ is a closed subspace of a Hilbert space, we denote by $P_\mathcal{H}$ the orthogonal
projection onto $\mathcal{H}$. 

Recall that if $\mathcal{H}_1,\mathcal{H}_2$, and $\mathcal{I}$ are closed
subspaces of a Hilbert space, then $\mathcal{H}_1$ and $\mathcal{H}_2$ are
said to be \emph{relatively orthogonal} over $\mathcal{I}$ if for any $u\in 
\mathcal{H}_1$ and $v\in \mathcal{H}_2$, we have $\left\langle
u,v\right\rangle =\left\langle P_{\mathcal{I}}u, P_{\mathcal{I} 
}v\right\rangle$. The following simple characterization of relative orthogonality will be useful.

\begin{lemma}
\label{ortho} Suppose $\mathcal{H}_1,\mathcal{H}_2$, and $\mathcal{I}$ are
closed subspaces of a Hilbert space. If in addition $\mathcal{I}\subset 
\mathcal{H}_2$, then $\mathcal{H}_1$ and $\mathcal{H}_2$ are relatively
orthogonal over $\mathcal{I}$ if and only if for any $u\in \mathcal{H}_1$, $ 
P_\mathcal{I}(u)=P_{\mathcal{H}_2}(u)$.
\end{lemma}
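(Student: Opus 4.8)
The plan is to prove the two implications separately, relying only on the elementary properties of orthogonal projections, namely that each $P_\mathcal{H}$ is self-adjoint and idempotent, together with the consequence of the hypothesis $\mathcal{I}\subset\mathcal{H}_2$ that $P_\mathcal{I}P_{\mathcal{H}_2}=P_\mathcal{I}$. No structure beyond the Hilbert space inner product is needed, so this is a purely formal manipulation, and I would present it as such.

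For the forward implication, I would assume that $\mathcal{H}_1$ and $\mathcal{H}_2$ are relatively orthogonal over $\mathcal{I}$ and fix $u\in\mathcal{H}_1$. Since $\mathcal{I}\subset\mathcal{H}_2$, both $P_\mathcal{I}(u)$ and $P_{\mathcal{H}_2}(u)$ lie in $\mathcal{H}_2$, hence so does their difference; to show the difference vanishes it therefore suffices to check that it is orthogonal to every $v\in\mathcal{H}_2$. For such $v$ I would compute, using self-adjointness, $\langle P_{\mathcal{H}_2}(u),v\rangle=\langle u,v\rangle$ and $\langle P_\mathcal{I}(u),v\rangle=\langle P_\mathcal{I}(u),P_\mathcal{I}(v)\rangle$, and then invoke relative orthogonality to identify $\langle u,v\rangle$ with $\langle P_\mathcal{I}(u),P_\mathcal{I}(v)\rangle$. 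The two quantities thus agree, so $\langle P_{\mathcal{H}_2}(u)-P_\mathcal{I}(u),v\rangle=0$ for all $v\in\mathcal{H}_2$, giving $P_\mathcal{I}(u)=P_{\mathcal{H}_2}(u)$.

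For the reverse implication, I would assume $P_\mathcal{I}(u)=P_{\mathcal{H}_2}(u)$ for every $u\in\mathcal{H}_1$ and verify the defining inner-product identity directly. Given $u\in\mathcal{H}_1$ and $v\in\mathcal{H}_2$, I would write $\langle u,v\rangle=\langle P_{\mathcal{H}_2}(u),v\rangle$ (as $v\in\mathcal{H}_2$), replace $P_{\mathcal{H}_2}(u)$ by $P_\mathcal{I}(u)$ using the hypothesis, and finally use self-adjointness and idempotency of $P_\mathcal{I}$ to rewrite $\langle P_\mathcal{I}(u),v\rangle=\langle P_\mathcal{I}(u),P_\mathcal{I}(v)\rangle$. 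This is exactly the condition $\langle u,v\rangle=\langle P_\mathcal{I}(u),P_\mathcal{I}(v)\rangle$ defining relative orthogonality.

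I do not anticipate a genuine obstacle here: the statement is a standard fact about nested closed subspaces of a Hilbert space. The only point that requires a little care is the use of the inclusion $\mathcal{I}\subset\mathcal{H}_2$ in the forward direction, where it is precisely what allows the vanishing of the vector $P_{\mathcal{H}_2}(u)-P_\mathcal{I}(u)$ to be reduced to testing against $\mathcal{H}_2$ alone rather than against the whole Hilbert space.
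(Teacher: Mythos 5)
Your proof is correct and follows essentially the same route as the paper: both directions rest on the identity $\langle P_{\mathcal{I}}u,P_{\mathcal{I}}v\rangle=\langle P_{\mathcal{I}}u,v\rangle$ for $v\in\mathcal{H}_2$ together with the inclusion $\mathcal{I}\subset\mathcal{H}_2$, and your reverse implication is the paper's chain of equalities verbatim. The only cosmetic difference is in the forward direction, where you test the difference $P_{\mathcal{H}_2}(u)-P_{\mathcal{I}}(u)$ against $\mathcal{H}_2$ while the paper applies $P_{\mathcal{H}_2}$ to both sides of $\langle u,v\rangle=\langle P_{\mathcal{I}}u,v\rangle$; these are the same argument.
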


\begin{proof}
Suppose $\mathcal{H}_1$ and $\mathcal{H}_2$ are relatively orthogonal over $ 
\mathcal{I}$. Fix $u\in \mathcal{H}_1$. For any $v\in \mathcal{H}_2$, $ 
\left\langle u,v\right\rangle =\left\langle P_{\mathcal{I}}u, P_{\mathcal{I} 
}v\right\rangle=\left\langle P_{\mathcal{I}}u, v\right\rangle$. Since $P_{ 
\mathcal{I}}u\in \mathcal{H}_2$, we have $P_{\mathcal{H}_2}u=P_{\mathcal{H} 
_2}(P_{\mathcal{I}}u)=P_{\mathcal{I}}u$.

Conversely, suppose for any vector in $\mathcal{H}_1$, its projection onto $ 
\mathcal{I}$ and ${\mathcal{H}_2}$ are the same. For any $u\in \mathcal{H}_1$
and $v\in \mathcal{H}_2$, we have 
\begin{equation*}
\left\langle P_{\mathcal{I}}u, P_{\mathcal{I}}v\right\rangle=\left\langle P_{ 
\mathcal{I}}u, v\right\rangle=\left\langle P_{\mathcal{H}_2}u,
v\right\rangle=\left\langle u, v\right\rangle
\end{equation*}
as desired. 
\end{proof}

\begin{definition}[$\mathbf{PrbAlg}_\Gamma$-sated extensions]\label{def:sated}

A \emph{functorial} $L^2$-\emph{subspace} of $\mathbf{PrbAlg} 
_\Gamma$-spaces is a composition of functors $V=W\circ L^2$, where $W$ is a
functor from the category $\mathbf{Hilb}_\Gamma$ to $\mathbf{Hilb}$ sending
any object $\mathcal{H}$ to a closed subspace $V(\mathcal{H})$ of $\mathcal{H 
}$ and any morphism $\phi$ from a $\mathbf{Hilb}_\Gamma$-object $\mathcal{H}$
to a $\mathbf{Hilb}_\Gamma$-object $\mathcal{K}$ to the restriction $ 
V(\phi):V(\mathcal{H})\to V(\mathcal{K})$.

Let $V$ be a functorial $L^2$-subspace of $\mathbf{PrbAlg}_\Gamma$-spaces. A 
$\mathbf{PrbAlg}_\Gamma$-space $X=(X,\mu)$ is said to be $V$-\emph{sated} if
for any $\mathbf{PrbAlg}_\Gamma$-morphism $\pi:(Y,\nu)\rightarrow (X,\mu)$,
the subspaces $\pi^*(L^2(X))$ and $V(Y)$ of $L^2(Y)$ are relatively
orthogonal over their common further subspace $\pi^*(V(X))$. The condition is equivalent to $\pi^*(P_{V(X
)}(h))=P_{V(Y)}(\pi^*(h))$ for any $h\in L^2(X )$, by Lemma \ref{ortho} and the inclusion relation $\pi^*(V(X )) \subset V(Y)$. Moreover, a $\mathbf{ 
PrbAlg}_\Gamma$-morphism $\pi:(Y,\nu)\rightarrow (X,\mu)$ is said to be 
\emph{relatively} $V$-\emph{sated} if for any further $\mathbf{PrbAlg} 
_\Gamma $-morphism $\psi: (Z,\lambda)\rightarrow (Y,\nu)$, the subspaces $ 
(\pi\circ \psi)^*(L^2(X))$ and $V(Z)$ are relatively orthogonal over $ 
\psi^*(V(Y))$.
\end{definition}

For the remainder of this section, we fix a functorial $L^2$-subspace $V$.

\begin{figure}[tbp]
\centering
\begin{tikzcd}
                & L^2(W) \arrow[rd] \arrow[ld] &                             &                             &                  \\
V(W) \arrow[rd] &                             & L^2(Z) \arrow[ld] \arrow[rd] &                             &                  \\
                & V(Z) \arrow[rd]             &                             & L^2(Y) \arrow[ld] \arrow[rd] &                  \\
                &                             & V(Y) \arrow[rd]             &                             & L^2(X ) \arrow[ld] \\
                &                             &                             & V(X )                        &                 
\end{tikzcd}
\caption{The subspace relations in Lemma \protect\ref{relative sated}.}
\label{fig:rel-sated}
\end{figure}
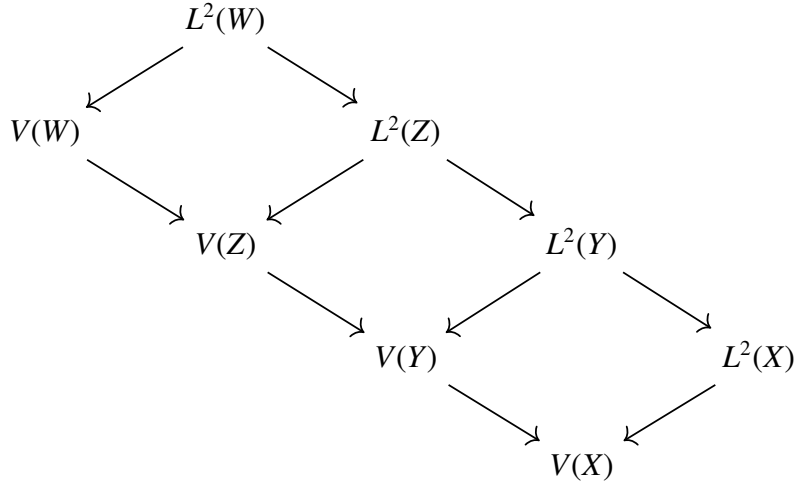

\begin{lemma}
\label{relative sated} Suppose that $\pi:Y\rightarrow X $ is a relatively $V$-sated $\mathbf{PrbAlg}_\Gamma$-morphism and $\phi:Z\rightarrow Y$ is a $ 
\mathbf{PrbAlg}_\Gamma$-morphism. Then $\pi\circ \phi: Z\rightarrow X $ is
relatively $V$-sated.
\end{lemma}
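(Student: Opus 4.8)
The plan is to unwind the definition of relative satedness for the composite $\pi\circ\phi$ and reduce everything, via Lemma \ref{ortho}, to a single identity between orthogonal projections. To show that $\pi\circ\phi\colon Z\to X$ is relatively $V$-sated, I fix an arbitrary further $\OpProbAlgG$-morphism $\psi\colon W\to Z$ and must verify that $((\pi\circ\phi)\circ\psi)^*(L^2(X))$ and $V(W)$ are relatively orthogonal over $\psi^*(V(Z))$. Since functoriality of $V$ gives the inclusion $\psi^*(V(Z))\subset V(W)$, Lemma \ref{ortho} lets me replace this by the projection identity $P_{\psi^*(V(Z))}(u)=P_{V(W)}(u)$ for every $u\in((\pi\circ\phi)\circ\psi)^*(L^2(X))$.

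The key step is to feed the composite morphism $\phi\circ\psi\colon W\to Y$ into the hypothesis that $\pi$ is relatively $V$-sated. By associativity $(\pi\circ\phi)\circ\psi=\pi\circ(\phi\circ\psi)$, so the relative satedness of $\pi$, again rephrased through Lemma \ref{ortho}, yields $P_{(\phi\circ\psi)^*(V(Y))}(u)=P_{V(W)}(u)$ for the same vectors $u$. It then remains to bridge the gap between the two ``base'' subspaces $(\phi\circ\psi)^*(V(Y))$ and $\psi^*(V(Z))$. Here functoriality of $V$ does the work: from $\phi^*(V(Y))\subset V(Z)$ I obtain, after applying the isometry $\psi^*$ and using the contravariant composition $(\phi\circ\psi)^*=\psi^*\circ\phi^*$, the nesting
$$(\phi\circ\psi)^*(V(Y))\subset \psi^*(V(Z))\subset V(W).$$

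With this chain of closed subspaces in hand, the conclusion follows from a routine nesting argument for orthogonal projections: since $P_{V(W)}(u)$ equals $P_{(\phi\circ\psi)^*(V(Y))}(u)$, it already lies in the smallest subspace $(\phi\circ\psi)^*(V(Y))$, and hence in $\psi^*(V(Z))$; moreover $\psi^*(V(Z))\subset V(W)$ gives $P_{\psi^*(V(Z))}=P_{\psi^*(V(Z))}P_{V(W)}$. Combining these two facts yields $P_{\psi^*(V(Z))}(u)=P_{\psi^*(V(Z))}P_{V(W)}(u)=P_{V(W)}(u)$, which is exactly the identity required by Lemma \ref{ortho}. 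I do not anticipate a genuine obstacle here: the argument is entirely formal, essentially a diagram chase in the subspace lattice depicted in Figure \ref{fig:rel-sated}. The only thing demanding care is the bookkeeping of the three functoriality-induced inclusions together with the direction of the Koopman pullbacks, i.e.\ the contravariance of the $L^2$-functor, so that the nesting is oriented correctly.
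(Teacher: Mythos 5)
Your proof is correct and follows essentially the same route as the paper: both reduce the claim via Lemma \ref{ortho} to a projection identity and both hinge on feeding the composite $\phi\circ\psi\colon W\to Y$ into the relative satedness of $\pi$. The only (harmless) difference is in the final step, where the paper invokes the satedness of $\pi$ a second time with the further extension $\phi\colon Z\to Y$ to identify $P_{V(Z)}((\pi\circ\phi)^*f)$ with $\phi^*P_{V(Y)}(\pi^*f)$, whereas you close the gap with the elementary nesting $(\phi\circ\psi)^*(V(Y))\subset\psi^*(V(Z))\subset V(W)$ and the fact that $P_{V(W)}(u)$ already lands in the smallest of these subspaces.
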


\begin{proof}
Let $\psi:W\rightarrow Z$ be a $\mathbf{PrbAlg}_\Gamma$-morphism. Fix $f\in
L^2(X )$ and $g\in V(W)$. We want to show 
\begin{equation*}
\left\langle (\pi \circ \phi \circ \psi)^*f ,g\right\rangle_{L^2(W)}
=\left\langle \psi^*P_{V(Z)}((\pi \circ \phi )^*f) ,g\right\rangle_{L^2(W)},
\end{equation*} 
(see Figure \ref{fig:rel-sated}). Applying the definition of relative
satedness of $Y\overset{\pi }{\rightarrow }X $ to the further extension $W 
\overset{\phi \circ \psi }{\rightarrow }Y$, we have 
\begin{equation*}
\left\langle (\pi \circ \phi \circ \psi)^*f,g\right\rangle_{L^2(W)}
=\left\langle (\phi\circ \psi)^*P_{V(Y)}(\pi^*f),g\right\rangle_{L^2(W)}.
\end{equation*} 
It remains to prove $P_{V(Z)}((\pi \circ \phi)^*f)=\phi^*P_{V(Y)}(\pi^*f)$.
By the relative satedness of $Y\overset{\pi }{\rightarrow }X $\ applied to
the further extension $Z\overset{\phi }{\rightarrow }Y$, $V (Z)$ and $ 
(\pi\circ\phi)^*(L^2(X ))$ are relatively orthogonal over $\phi^*(V(Y))$.
Lemma $\ref{ortho}$ gives the desired result.
\end{proof}

\begin{lemma}\label{lm:sated1}
\label{sated} Let $(A,\leq)$ be a directed set with no maximal element and $ 
((X_\alpha)_{\alpha\in A},(\pi_{\alpha_1,\alpha_2})_{\alpha_1,\alpha_2\in A,
\alpha_1\leq \alpha_2})$ be an inverse system of $\mathbf{PrbAlg}_\Gamma$-spaces with inverse limit $(X , (\pi_\alpha)_{\alpha\in A})$. Further
assume that for all $\alpha_1,\alpha_2\in A$ with $\alpha_1<\alpha_2$ the $ 
\mathbf{PrbAlg}_\Gamma$-morphism $\pi_{\alpha_1,\alpha_2}$ is relatively $V$-sated. Then the inverse limit $X $ is $V$-sated.
\end{lemma}

\begin{proof}
Each $\mathbf{PrbAlg}_\Gamma$-morphism $\pi_\alpha:X \rightarrow X _\alpha$
is relatively $V$-sated because we can factorize $\pi_\alpha=\pi_{\alpha,
\alpha^{\prime }}\circ \pi_{\alpha^{\prime }}$ for some $\alpha<\alpha^{ 
\prime }$ and apply Lemma \ref{relative sated}. Let $\psi:Y\rightarrow X $
be an arbitrary further $\mathbf{PrbAlg}_\Gamma$-morphism. For any $g\in
V(Y) $ and $f\in \bigcup_{\alpha\in A} \pi_\alpha^*(L^2(X _\alpha))$, we
have 
\begin{equation*}
\left\langle \psi^*f,g\right\rangle_{L^2(Y)}=\left\langle \psi^*P_{V(X
)}(f),g\right\rangle_{L^2(Y)}.
\end{equation*}
Since $\bigcup_{\alpha\in A} \pi_\alpha^*(L^2(X _\alpha))$ is dense in $ 
L^2(X )$, $f$ in the last equation can be replaced by any function in $L^2(X
)$. Thus, $V(Y)$ and $\psi^*(L^2(X ))$ are relatively orthogonal over $ 
\psi^*(V(X ))$, and so $X $ is $V$-sated.
\end{proof}

\begin{lemma}
\label{relative sated2} Every $\mathbf{PrbAlg}_\Gamma$-space $X $ has a
relatively $V$-sated extension.
\end{lemma}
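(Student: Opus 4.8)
The plan is to reduce relative $V$-satedness to the maximization of a scalar ``energy'' functional, and then to realize that maximum by a transfinite inverse-limit construction. For an extension $\pi\colon Y\to X$ (a $\OpProbAlgG$-morphism) and $f\in L^2(X)$, set $\mathcal{E}_Y(f):=\|P_{V(Y)}(\pi^*f)\|^2$, and let $S(f):=\sup_{q\colon W\to X}\mathcal{E}_W(f)$, the supremum over all extensions of $X$; this is a well-defined real in $[0,\|f\|^2]$. The first step is the characterization: if $\mathcal{E}_Y(f)=S(f)$ for every $f\in L^2(X)$, then $\pi$ is relatively $V$-sated. Indeed, for any further morphism $\psi\colon Z\to Y$ the composite $\pi\circ\psi$ is again an extension of $X$, so $\mathcal{E}_Z(f)\le S(f)=\mathcal{E}_Y(f)$, while monotonicity (below) gives $\mathcal{E}_Z(f)\ge\mathcal{E}_Y(f)$; hence $\mathcal{E}_Z(f)=\mathcal{E}_Y(f)$. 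Because $\psi^*(V(Y))\subset V(Z)$ and $\psi^*P_{V(Y)}(\pi^*f)=P_{\psi^*(V(Y))}((\pi\psi)^*f)$ has the same norm as $P_{V(Z)}((\pi\psi)^*f)$, the projections onto the nested subspaces $\psi^*(V(Y))\subset V(Z)$ must coincide, which by Lemma \ref{ortho} is exactly relative orthogonality of $(\pi\psi)^*(L^2(X))$ and $V(Z)$ over $\psi^*(V(Y))$.

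Two structural facts drive the construction. First, \emph{monotonicity}: if $r\colon W'\to W$ is a further extension, then $r^*(V(W))\subset V(W')$ by functoriality of $V$, so $r^*P_{V(W)}(q^*f)$ lies in $V(W')$ and realizes the projection of $(qr)^*f$ onto the subspace $r^*(V(W))\subset V(W')$; projecting onto the larger space $V(W')$ can only increase the norm, giving $\mathcal{E}_{W'}(f)\ge\mathcal{E}_W(f)$. Second, \emph{boundedness} by $\|f\|^2$. From these I would extract two approximation steps. Sub-step A: given any extension $\pi\colon Y\to X$, any $f$, and $\eps>0$, choose an extension $W$ of $X$ with $\mathcal{E}_W(f)>S(f)-\eps$ and form the relatively independent product $Z:=Y\times_X W$ from Section \ref{sec-prelim}; since $Z$ extends both $Y$ and $W$, monotonicity yields $\mathcal{E}_Z(f)>S(f)-\eps$. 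Sub-step B: iterating A with $\eps=1/n$ along a chain $Y=Z_0\leftarrow Z_1\leftarrow\cdots$ and taking the inverse limit $Z_\infty$ (which exists in $\OpProbAlgG$), the bound $\mathcal{E}_{Z_\infty}(f)\ge\mathcal{E}_{Z_n}(f)>S(f)-1/n$ for all $n$, together with $\mathcal{E}_{Z_\infty}(f)\le S(f)$, forces $\mathcal{E}_{Z_\infty}(f)=S(f)$. Thus the supremum for a single function can be attained by a further extension.

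To attain the supremum for \emph{all} $f$ simultaneously, I would run a recursion and take a final inverse limit, using the crucial persistence property that once $\mathcal{E}_{Y}(f)=S(f)$ holds it continues to hold for every further extension (monotone, bounded above by $S(f)$). Fix a dense subset $D=\{f_\xi\}_{\xi<\kappa}$ of $L^2(X)$, where $\kappa$ is the density character, and build an ordinal-indexed inverse system $(Y_\alpha)_{\alpha\le\kappa}$ with $Y_0=X$, with $Y_{\alpha+1}\to Y_\alpha$ the extension produced by Sub-step B realizing $\mathcal{E}_{Y_{\alpha+1}}(f_\alpha)=S(f_\alpha)$, and with $Y_\lambda$ the inverse limit at limit ordinals. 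For the final $Y:=Y_\kappa$, persistence gives $\mathcal{E}_Y(f_\xi)=S(f_\xi)$ for every $\xi<\kappa$. Finally I would pass from $D$ to all of $L^2(X)$ by continuity: $f\mapsto\mathcal{E}_Y(f)^{1/2}$ is $1$-Lipschitz (a composition of the isometry $\pi^*$, the contraction $P_{V(Y)}$, and the norm), and $f\mapsto S(f)^{1/2}=\sup_{q}\|P_{V(W)}(q^*f)\|$ is a supremum of $1$-Lipschitz maps bounded by $\|f\|$, hence also $1$-Lipschitz; agreeing on the dense set $D$, they agree on all of $L^2(X)$. By the characterization of the first paragraph, $\pi\colon Y\to X$ is relatively $V$-sated.

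The main obstacle is precisely the inseparability of $L^2(X)$: Austin's original argument enumerates a countable dense set and diagonalizes, which is unavailable here. The remedy above replaces the countable diagonal by a transfinite inverse limit indexed by the density character, which is legitimate because $\OpProbAlgG$ has arbitrary inverse limits and relatively independent products, and because the energy functional is monotone, bounded, and Lipschitz in $f$ — so no countability is ever needed beyond choosing a dense set on which to check equality of two continuous functionals. Care must be taken that the inverse-limit projections compose coherently into a single factor map $Y\to X$ (automatic from the universal property) and that $S(f)$, being a supremum of a bounded set of reals, is well-defined despite the extensions ranging over a proper class.
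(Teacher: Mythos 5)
Your proposal is correct, and it reaches the conclusion by a genuinely different organization of the transfinite construction than the paper's proof. The paper follows Austin's original scheme: it enumerates \emph{all} of $L^2(X)$ by an ordinal in such a way that every function recurs cofinally, at each successor or limit stage chooses an extension securing at least half of the achievable increment of $\|P_{V(\cdot)}(\cdot)\|_2$ for the current function, and then derives relative satedness by contradiction from the convergence of the bounded increasing transfinite net of norms. You instead make the ``energy'' $S(f)=\sup_W\|P_{V(W)}(q^*f)\|^2$ explicit and \emph{attain} it exactly for each single $f$: a near-optimal extension $W$ of $X$ is merged into the current tower via the relatively independent product $Y\times_X W$ (monotonicity of the energy under further extensions does the rest), and a countable chain plus inverse limit realizes the supremum. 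This removes both the cofinal-repetition bookkeeping and the half-gain trick, and lets you run the recursion only over a dense subset of $L^2(X)$, upgrading to all of $L^2(X)$ at the end by observing that $f\mapsto\|P_{V(Y)}(\pi^*f)\|$ and $f\mapsto S(f)^{1/2}$ are both $1$-Lipschitz. Your reduction of relative satedness to the single scalar identity $\mathcal{E}_Y(f)=S(f)$ (via Lemma \ref{ortho} and the fact that nested projections with equal norms coincide) is exactly the mechanism the paper uses implicitly, so the two proofs rest on the same Hilbert-space facts. What your route buys is a cleaner final verification (no contradiction argument) at the cost of invoking the relatively independent product of Section \ref{sec-prelim}, which the paper constructs but does not use in this lemma; both proofs rely on arbitrary inverse limits in $\OpProbAlgG$, and both share the same mild set-theoretic informality about taking suprema over the proper class of extensions, which you correctly flag and which is resolved by noting the values form a subset of $[0,\|f\|^2]$.
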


\begin{proof}
We write all elements of $L^2(X )$ as $\{f_{\beta^{\prime
}}\}_{\beta^{\prime }<\alpha^{\prime }}$ for some limit ordinal number $ 
\alpha^{\prime }$. For each ordinal $\gamma<\alpha^{\prime }$, we let $ 
A_\gamma=\{f_{\beta^{\prime }}: \beta^{\prime }\leq\gamma\}$. Define a
well-ordering on the set $\bigcup_{\gamma<\alpha^{\prime }} \{\gamma\}\times
A_\gamma$ by the relation 
\begin{equation*}
(\gamma_1,f_{\beta_1^{\prime }})<(\gamma_2,f_{\beta_2^{\prime }})
\Longleftrightarrow \gamma_1<\gamma_2 \text{ or } (\gamma_1=\gamma_2 \text{
and } \beta_1^{\prime }<\beta_2^{\prime }).
\end{equation*}
Since $\bigcup_{\gamma<\alpha^{\prime }} \{\gamma\}\times A_\gamma$ is well
ordered, there exists an ordinal $\alpha$ and an order-preserving bijection $ 
\Psi$ from $\{\beta: \beta<\alpha\}$ to $\bigcup_{\gamma<\alpha^{\prime }}
\{\gamma\}\times A_\gamma$. Since $\alpha^{\prime }$ is a limit ordinal, for
each $\gamma<\alpha^{\prime }$ there is some $\gamma<\gamma^{\prime
}<\alpha^{\prime }$. As a result, there is no maximal element in $ 
\bigcup_{\gamma<\alpha^{\prime }} \{\gamma\}\times A_\gamma$; in other
words, $\alpha$ is a limit ordinal as well.

Let $\Phi=\Pi\circ\Psi$ where $\Pi$ is the projection mapping to the second
coordinate. For each $\gamma<\alpha$ and $f_{\beta^{\prime }}\in L^2(X )$,
we claim that there exists $\tau>\gamma$ such that $\Phi(\tau)=f_{\beta^{ 
\prime }}$. Suppose $\Psi(\gamma)=(\beta, g)$. We let $\beta_{\max}:=\max\{ 
\beta,\beta^{\prime }\}+1<\alpha^{\prime }$ and then $\Psi^{-1}(\beta_{ 
\max},f_{\beta^{\prime }})$ is the desired $\tau$. The interpretation is, when
enumerating $L^2(X )$ by $\Phi$, each function appears not only infinitely
many times but also arbitrarily late.

Resorting to transfinite induction, we construct a $\mathbf{PrbAlg}_\Gamma$-extension $X _\beta$ of $X $ for every $\beta<\alpha$ and a $\mathbf{PrbAlg} 
_\Gamma$-morphism $\phi_\gamma^\beta$ from $X _\beta$ to $X _\gamma$ for
every $\gamma<\beta<\alpha$. Set $X _\emptyset:=X $. Suppose $\epsilon\leq
\alpha$ is an ordinal and for each $\gamma<\beta<\epsilon$, $X _\beta$ and $ 
\phi_\gamma^\beta$ have been constructed.

Case 1: $\epsilon$ is the successor of $\epsilon-1$. For any $\mathbf{PrbAlg} 
_\Gamma$-extension $\eta:Z\to X _{\epsilon -1}$, we have 
\begin{equation*}
\|P_{V(Z)}((\phi_\emptyset^{\epsilon-1}\circ \eta)^*\Phi(\epsilon))\|_2-
\|P_{V(X _{\epsilon-1})}((\phi_\emptyset^{\epsilon-1})^*\Phi(\epsilon))\|_2
\leq \|\Phi(\epsilon)\|_2- \|P_{V(X
_{\epsilon-1})}((\phi_\emptyset^{\epsilon-1})^*\Phi(\epsilon))\|_2
\end{equation*}
since every orthogonal projection is a contraction. Hence, we find a $ 
\mathbf{PrbAlg}_\Gamma$-extension $\phi^{\epsilon}_{\epsilon-1}:X
_\epsilon\rightarrow X _{\epsilon-1}$ such that the difference 
\begin{equation*}
\|P_{V(X _{\epsilon})}((\phi_\emptyset^{\epsilon-1}\circ
\phi_{\epsilon-1}^{\epsilon})^*\Phi(\epsilon)) \|_2- \|P_{V(X
_{\epsilon-1})}((\phi_\emptyset^{\epsilon-1})^*\Phi(\epsilon)) \|_2
\end{equation*}
is at least half its supremum value over all extensions $\eta:Z\to X
_{\epsilon -1}$. For any $\gamma<\epsilon-1$, we set $\phi_\gamma^\epsilon:= 
\phi_\gamma^{\epsilon-1}\circ \phi^{\epsilon}_{\epsilon-1}$.

Case 2: $\epsilon$ is a limit ordinal. Let $(Z_\epsilon,(\psi_\beta^ 
\epsilon)_{\beta<\epsilon})$ be the inverse limit of the inverse system $((X
_\beta)_{\beta<\epsilon},(\phi_{\gamma}^ \beta)_{\gamma\leq \beta
<\epsilon})$. Let $\psi_{\epsilon}:X _\epsilon\rightarrow Z_{\epsilon}$ be a 
$\mathbf{PrbAlg}_\Gamma$-extension such that the difference 
\begin{equation*}
\|P_{V(X _{\epsilon})}((\psi_{\epsilon}\circ
\psi_\emptyset^{\epsilon})^*\Phi(\epsilon))\|_2 -
\|P_{V(Z_{\epsilon})}((\psi_\emptyset^{\epsilon})^*\Phi(\epsilon))\|_2
\end{equation*}
is at least half its supremum possible value over all extensions of $ 
Z_{\epsilon}$. Set $\phi_\gamma^\epsilon:=\psi_\gamma^{\epsilon}\circ
\psi_{\epsilon}$.

We now show that $\phi_\emptyset^\alpha:X _\alpha\rightarrow X $ is
relatively $V$-sated. Let $\pi:Y\rightarrow X _\alpha$ be an arbitrary
further extension. By Lemma $\ref{ortho}$, it is equivalent to showing that
for any $f\in L^2(X )$, 
\begin{equation*}
P_{V(Y)}((\phi_\emptyset^\alpha\circ \pi)^*f)= \pi^* P_{V(X
_\alpha)}((\phi_\emptyset^\alpha)^*f).
\end{equation*}
Since $\pi^*(V(X _\alpha))\subset V(Y)$, it suffices to show 
\begin{equation*}
\|P_{V(Y)}((\phi_\emptyset^\alpha\circ\pi)^*f)\|_2\leq \|P_{V(X
_\alpha)}((\phi_\emptyset^\alpha)^*f) \|_2.
\end{equation*}
Suppose for contradiction, $\|P_{V(Y)}((\phi_\emptyset^\alpha\circ
\pi)^*f)\|_2> \|P_{V(X _\alpha)}((\phi_\emptyset^\alpha)^*f) \|_2$. We know $ 
\|P_{V(X _\gamma)}((\phi_\emptyset^\gamma)^*f) \|_2$ is increasing in $ 
\gamma $ and bounded above by $\|f\|_2$. By the construction of $\Phi$, $f$
appears in the image of $\Phi$ infinitely many times. There exists an
ordinal $\gamma $ large enough such that $\Phi(\gamma)=f$ and one of the
following holds:

\begin{itemize}
\item[(i)] $\gamma$ is a successor and 
\begin{equation*}
\|P_{V(X _\gamma)}((\phi_\emptyset^\gamma)^*f) \|_2-\|P_{V(X
_{\gamma-1})}((\phi_\emptyset^{\gamma-1})^*f) \|_2<\frac{1}{2} 
\left(\|P_{V(Y)}((\phi_\emptyset^\alpha\circ \pi)^*f)\|_2- \|P_{V(X
_\alpha)}((\phi_\emptyset^\alpha)^*f)\|_2 \right).
\end{equation*}

\item[(ii)] $\gamma$ is a limit ordinal and 
\begin{equation*}
\|P_{V(X _\gamma)}((\phi_\emptyset^\gamma)^*f)
\|_2-\|P_{V(Z_{\gamma})}((\psi_\emptyset^{\gamma})^*f) \|_2<\frac{1}{2} 
\left(\|P_{V(Y)}((\phi_\emptyset^\alpha\circ \pi)^*f)\|_2- \|P_{V(X
_\alpha)}((\phi_\emptyset^\alpha)^*f) \|_2 \right).
\end{equation*}
\end{itemize}

Since $\|P_{V(X _\alpha)}((\phi_\emptyset^\alpha)^*f)\|_2\geq\|P_{V(X
_{\gamma-1})}((\phi_\emptyset^{\gamma-1})^*f) \|_2$ when $\gamma$ is a
successor and $\|P_{V(X _\alpha)}((\phi_\emptyset^\alpha)^*f)
\|_2\geq\|P_{V(Z_{\gamma})}((\psi_\emptyset^{\gamma})^*f)\|_2$ when $\gamma$
is a limit ordinal, we have 
\begin{equation*}
\|P_{V(X _\gamma)}((\phi_\emptyset^\gamma)^*f) \|_2-\|P_{V(X
_{\gamma-1})}((\phi_\emptyset^{\gamma-1})^*f) \|_2<\frac{1}{2} 
\left(\|P_{V(Y)}(( \phi_\emptyset^\alpha\circ \pi)^*f)\|_2- \|P_{V(X
_{\gamma-1})}((\phi_\emptyset^{\gamma-1})^*f) \|_2 \right),
\end{equation*}
or 
\begin{equation*}
\|P_{V(X _\gamma)}((\phi_\emptyset^\gamma)^*f)
\|_2-\|P_{V(Z_{\gamma})}((\psi_\emptyset^{\gamma})^*f) \|_2<\frac{1}{2} 
\left(\|P_{V(Y)}((\phi_\emptyset^\alpha\circ \pi)^*f )\|_2-
\|P_{V(Z_{\gamma})}((\psi_\emptyset^{\gamma})^*f)\|_2 \right),
\end{equation*}
either of which contradicts our choice of $X _\gamma$. Thus we have $ 
\phi_\emptyset^\alpha:X _\alpha\rightarrow X $ is relatively $V$-sated.
\end{proof}

Lemma \ref{lm:sated1} and Lemma \ref{relative sated2} give the following theorem.

\begin{theorem}
If $V$ is a functorial $L^2$-subspace of $\mathbf{PrbAlg}_\Gamma$-spaces,
then for every $\mathbf{PrbAlg}_\Gamma$-space $X=(X,\mu)$, there is a $ 
\mathbf{PrbAlg}_\Gamma$-morphism $\pi:Y\rightarrow X $ such that $Y=(Y,\nu)$
is $V$-sated.
\end{theorem}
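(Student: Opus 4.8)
The plan is to iterate the relative satedness construction of Lemma \ref{relative sated2} and pass to an inverse limit, exactly as foreshadowed in the commented remark preceding the statement. Concretely, I would set $X_0 = X$ and, using Lemma \ref{relative sated2}, recursively produce for each $n \in \N$ a $\OpProbAlgG$-morphism $\pi_{n,n+1}\colon X_{n+1} \to X_n$ that is relatively $V$-sated. This yields an inverse sequence $\cdots \to X_2 \to X_1 \to X_0 = X$ indexed by the directed set $(\N, \leq)$, which has no maximal element, so the setup of Lemma \ref{sated} is in force.

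Next I would verify the hypothesis of Lemma \ref{sated}, namely that for all $m < n$ the transition morphism $\pi_{m,n}\colon X_n \to X_m$, defined as the composite $\pi_{m,m+1}\circ \cdots \circ \pi_{n-1,n}$, is relatively $V$-sated (and not merely the single-step maps). This follows by a short induction on $n-m$ using Lemma \ref{relative sated}: since $\pi_{m,m+1}$ is relatively $V$-sated and $\pi_{m+1,n}$ is an arbitrary further $\OpProbAlgG$-morphism, their composite $\pi_{m,n}$ is again relatively $V$-sated. This is precisely the point at which Lemma \ref{relative sated} carries the load.

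With the hypothesis in hand, I would appeal to the fact recorded in the definition of $\OpProbAlg$ that this category, and hence its dynamical version $\OpProbAlgG$, admits arbitrary inverse limits, so the inverse limit $(X_\infty, (\pi_\alpha)_{\alpha\in\N})$ of the tower exists. Lemma \ref{sated} then applies directly and shows that $X_\infty$ is $V$-sated. Taking $Y = X_\infty$ and $\pi = \pi_0\colon X_\infty \to X$ the limiting projection onto the bottom of the tower gives the desired $V$-sated $\OpProbAlgG$-space together with a $\OpProbAlgG$-morphism $\pi\colon Y \to X$.

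The argument is essentially a bookkeeping assembly once Lemmas \ref{relative sated}, \ref{sated}, and \ref{relative sated2} are established; the genuine analytic content (the projection-norm monotonicity and the transfinite diagonalization) has already been absorbed into those lemmas. The only steps demanding attention are the induction confirming that the transition maps of the tower are relatively $V$-sated and the invocation of existence of inverse limits in $\OpProbAlgG$ in the uncountable setting, and I expect neither to present a serious obstacle.
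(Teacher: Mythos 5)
Your proposal is correct and is essentially the paper's own argument: the authors likewise obtain the sated extension by iterating Lemma \ref{relative sated2} to build an inverse sequence of relatively $V$-sated extensions over $\N$, noting via Lemma \ref{relative sated} that all composite transition maps remain relatively $V$-sated, and then applying Lemma \ref{sated} to the inverse limit. (A single application of Lemma \ref{relative sated} to the factorization $\pi_{m,n}=\pi_{m,m+1}\circ\pi_{m+1,n}$ already suffices, so even the induction is not needed.)
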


\subsection{Extending factors relative to subgroups}

In this section, we show how to extend a factor map relative to a subgroup
to a factor map of the whole group. The corresponding result for countable
groups and standard Lebesgue spaces is \cite[Theorem 2.1] 
{austin2016nonconventional}.

\begin{theorem}
\label{ext} Let $\Gamma$ be an arbitrary discrete group, not necessarily
countable or amenable. Let $H$ be a subgroup of $\Gamma$. Let $X=(X,\mu,T)$
be a $\mathbf{PrbAlg}_\Gamma$-system and $Y=(Y,\nu,S)$ a $\mathbf{PrbAlg}_H$-system. Denote by $X_H=(X,\mu,T|_H)$ the $\mathbf{PrbAlg}_H$-system where $ 
T|_H$ is the restriction of the group homomorphism $T:\Gamma\to \mathrm{Aut} 
(X,\mu)$ to $H$. If $\beta:Y\to X_H$ is a $\mathbf{PrbAlg}_H$-morphism, then
there are a $\mathbf{PrbAlg}_\Gamma$-system $Z=(Z,\theta,R)$, a $\mathbf{ 
PrbAlg}_\Gamma$-extension $\pi:Z\to X$, and a $\mathbf{PrbAlg}_H$-extension $ 
\alpha:Z_H\to Y$ such that the diagram 
\begin{equation*}
\begin{tikzcd} & Y \arrow{dr}{\beta} \\ Z_H \arrow{ur}{\alpha}
\arrow{rr}{\pi} && X_H \end{tikzcd}
\end{equation*}
commutes in $\mathbf{PrbAlg}_H$.
\end{theorem}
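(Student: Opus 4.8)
The plan is to realize $Z$ as a relative coinduction of $Y$ from $H$ to $\Gamma$ over the base $X$, concretely as an (uncountable) relatively independent product over $X$ indexed by the coset space $\Gamma/H$, assembled as an inverse limit. Since relatively independent products are defined through disintegrations, I would first pass to the canonical models $\Stone(X)$ and $\Stone(Y)$, where Theorem \ref{thm-disintegration} supplies a canonical, continuously varying and $H$-equivariant disintegration $(\nu_x)_{x\in Z_X}$ of $\nu$ over $\mu_{Z_X}$ along $\Stone(\beta)$, and carry out the construction there, returning to $\OpProbAlg$ at the end via $\Mes\circ\Abs$.

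Fix a transversal $\{s_j\}_{j\in J}$ for the left cosets $\Gamma/H$, chosen so that $s_{j_0}=e$ represents $H$ itself. For each $j$ set $\beta_{s_j}:=T^{s_j}\circ\beta\colon Y\to X$, a measure-preserving $\OpProbAlg$-morphism whose disintegration is $(\nu_{T^{s_j^{-1}}x})_{x}$, since $\beta_{s_j}^{-1}(x)=\beta^{-1}(T^{s_j^{-1}}x)$. For each finite $F\subseteq J$, form $Z_F$, the relatively independent product over $X$ of the copies $(Y,\beta_{s_j})_{j\in F}$, using the iterated construction of Section \ref{sec-prelim}; the coordinate projections turn the $Z_F$ into an inverse system indexed by the finite subsets of $J$ ordered by inclusion, whose inverse limit $Z:=\varprojlim_F Z_F$ exists because $\OpProbAlg$ has arbitrary inverse limits. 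On canonical models the associated measure is $\theta=\int_{Z_X}\bigotimes_{j\in J}\nu_{T^{s_j^{-1}}x}\,d\mu_{Z_X}(x)$.

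Next I would define the $\Gamma$-action $R$. The element $\gamma$ permutes $J$ through $\sigma_\gamma(j)$, the index of $\gamma s_j H$, and produces the $H$-valued cocycle $h_{\gamma,j}:=s_{\sigma_\gamma(j)}^{-1}\gamma s_j\in H$; I would let $R^\gamma$ act on the base by $T^\gamma$ and send the $j$-th coordinate to the $\sigma_\gamma(j)$-th coordinate twisted by $S^{h_{\gamma,j}}$ (with inverses placed so that $R$ is a genuine left action). A short computation using the $H$-equivariance $\beta\circ S^h=T^h\circ\beta$ shows this is consistent over the base, namely $\beta(S^{h_{\gamma,j}}y)=T^{s_{\sigma_\gamma(j)}^{-1}\gamma}x$ when $\beta(y)=T^{s_j^{-1}}x$, while the identities $\sigma_{\gamma_1\gamma_2}=\sigma_{\gamma_1}\sigma_{\gamma_2}$ and $h_{\gamma_1\gamma_2,j}=h_{\gamma_1,\sigma_{\gamma_2}(j)}\,h_{\gamma_2,j}$ make $\gamma\mapsto R^\gamma$ a homomorphism into $\Aut(Z)$. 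Because $\gamma$ merely permutes the finite stages $Z_F\to Z_{\sigma_\gamma(F)}$ compatibly with the connecting projections, $R^\gamma$ descends to an automorphism of the limit. Measure-preservation is where Theorem \ref{thm-disintegration} is essential: its $H$-equivariance $\nu_{T^h x}=(S^h)^*\nu_x$, applied with $h=h_{\gamma,j}$, shows that translating the base by $T^\gamma$ while twisting the $j$-th fiber by $S^{h_{\gamma,j}}$ carries the conditional law $\nu_{T^{s_j^{-1}}x}$ of coordinate $j$ to the conditional law $\nu_{T^{s_{\sigma_\gamma(j)}^{-1}\gamma}x}$ of coordinate $\sigma_\gamma(j)$ over the new base point $T^\gamma x$, so $\theta$ is $R^\gamma$-invariant.

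Finally I would take $\pi\colon Z\to X$ to be the base projection of the relatively independent product, which is $\Gamma$-equivariant and exhibits $X$ as a factor, and $\alpha\colon Z_H\to Y$ to be the projection onto the $j_0$-coordinate. Since $\sigma_h(j_0)=j_0$ and $h_{h,j_0}=h$ for every $h\in H$, the map $\alpha$ is $H$-equivariant, and $\beta(\alpha(z))=T^{s_{j_0}^{-1}}\pi(z)=\pi(z)$ yields the required commutativity $\beta\circ\alpha=\pi$ in $\OpProbAlg_H$; applying $\Mes\circ\Abs$ then transports the entire diagram back to the probability algebra category. The main obstacle, and the only essential departure from Austin's countable argument, is the passage to a possibly uncountable index set $\Gamma/H$: the coinduced product must be built as an inverse limit of finite relatively independent products, and one must verify, using the canonical disintegration of Theorem \ref{thm-disintegration}, that the permutation--twist--translation data assemble into a single well-defined \emph{measure-preserving} $\Gamma$-action on the limit rather than merely on each finite stage.
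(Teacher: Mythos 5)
Your construction is the same object the paper builds: the relative coinduction of $Y$ from $H$ to $\Gamma$ over $X$, carried by the fiberwise product measure $\int_{\tilde X}\bigotimes_{\omega}\nu_{\tilde T^{\omega^{-1}}x}\,d\tilde\mu(x)$ over a set of coset representatives, and it relies on the same key inputs (passage to canonical models, the canonical disintegration of Theorem \ref{thm-disintegration}, and its equivariance $\nu_{T^{h}x}=(S^{h})^*\nu_x$, which is what makes the action measure-preserving). The difference is in the packaging, and it shifts where the technical work lies. The paper realizes $\tilde Z$ as the shift-invariant subspace of $\tilde Y^{\Gamma}$ cut out by $y_{\gamma\eta}=\tilde S^{\eta^{-1}}y_\gamma$ and $\tilde\beta(y_\gamma)=\tilde T^{\gamma^{-1}}\tilde\beta(y_e)$; the cost is that this closed set need not be Baire measurable in $\tilde Y^{\Gamma}$, so one must verify that the product measure descends to the trace $\sigma$-algebra, and one must check that the product over representatives is independent of the choice of transversal (this is what makes the shift preserve the measure). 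You instead index by $\Gamma/H$ with a fixed transversal, define the action as a coordinate permutation twisted by the cocycle $h_{\gamma,j}=s_{\sigma_\gamma(j)}^{-1}\gamma s_j$, and assemble the possibly uncountable product as an inverse limit of finite relatively independent products in $\OpProbAlg$. That trades the Baire-measurability issue for the cocycle identities and the compatibility of the twisted permutation maps with the connecting projections of the inverse system; both verifications are stated in your sketch and are correct (note only that the finite-stage relatively independent products must be formed in the plain, non-dynamical category, since the maps $T^{s_j}\circ\beta$ are not $H$-equivariant, with the action imposed afterwards on the limit). Either route is viable: yours is cleaner measure-theoretically, while the paper's description of the space itself is transversal-free.
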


\begin{proof}
We pass to the
canonical models $\tilde{X}=(\tilde{X},\mathcal{B}a(\tilde{X}),\tilde{\mu}, 
\tilde{T})$, $\tilde{Y}=(\tilde{Y},\mathcal{B}a(\tilde{Y}),\tilde{\nu}, 
\tilde{S})$, and $\tilde{\beta}$ of $X$, $Y$, and $\beta$, respectively. We construct $Z$, $\alpha$, and $\pi$ as follows. Firstly, we construct a ${\mathbf{CH}}_\Gamma$-system\footnote{We denote by $\mathbf{CH}$ the category of compact Hausdorff spaces and continuous maps, and ${\mathbf{CH}}_\Gamma$ denotes the category of topological dynamical $\Gamma$-systems formed on compact Hausdorff spaces and continuous factor maps, where the $\Gamma$-action is given by homeomorphisms.} $(\tilde{Z},\tilde{R})$
and ${\mathbf{CH}}_\Gamma$-maps $\tilde{\alpha}$ and $\tilde{\pi}$
satisfying a related commutative diagram in the dynamical category ${\mathbf{ 
CH}}_\Gamma$. Secondly, we construct a probability measure $\tilde\theta$ on $ 
(\tilde Z,\mathcal{B}a(\tilde{Z}))$ and show that it preserves the $\tilde{R}
$-action. Finally, we verify that this ${\mathbf{CHPrb}_{\Gamma}}$-system
satisfies the right commutative diagram. We can then map this diagram to the
dynamical categories of probability algebras using the deletion and
abstraction functors $\mathtt{Alg}$ and $\mathtt{Abs}$.

Step 1: we build a ${\mathbf{CH}}_\Gamma$-system $(\tilde{Z},\tilde{R})$.
Let 
\begin{equation*}
\tilde{Z}\coloneqq \{(y_\gamma)_\gamma\in \tilde{Y}^{\Gamma} :
y_{\gamma\eta} = \tilde{S}^{\eta^{-1}}y_\gamma \text{ and } \tilde{\beta} 
(y_\gamma) = \tilde{T}^{\gamma^{-1}}\tilde{\beta}(y_e) \text{ for all }
\gamma \in \Gamma, \eta \in H\}
\end{equation*}
where $e$ is the identity of $\Gamma$. Note that $\tilde{Z}$ is a compact
subspace of $\tilde{Y}^\Gamma$ (this basically follows from the fact that $ 
\tilde{S},\tilde{T}$ act by homeomorphisms and $\tilde{\beta}$ is
continuous). We can define a ${\mathbf{CH}}_\Gamma$-action $\tilde{R} 
:\Gamma\to \mathrm{Aut}(\tilde{Z})$ by 
\begin{equation*}
\tilde{R}^{\gamma^{\prime }}
((y_\gamma)_{\gamma\in\Gamma})=(y_{(\gamma^{\prime -1}\gamma})_{\gamma\in
\Gamma}.
\end{equation*}
(One easily checks that $\tilde{Z}$ is an $\tilde R$-invariant set so that $ 
\tilde R$ is well defined).

We set $\tilde{\alpha} : \tilde{Z}\rightarrow \tilde{Y}$ to be $\tilde{\alpha 
}((y_\gamma)_{\gamma\in \Gamma}) := y_e$ and $\tilde{\pi}:=\tilde{\beta}
\circ \tilde{\alpha}$. By construction, the diagram 
\begin{equation*}
\begin{tikzcd} & \tilde{Y} \arrow{dr}{\tilde\beta} \\ \tilde{Z}_H
\arrow{ur}{\tilde{\alpha}} \arrow{rr}{\tilde\pi} && \tilde{X}_H \end{tikzcd}
\end{equation*}
commutes in the dynamical category ${\mathbf{CH}}_H$. By construction of $ 
\tilde{Z}$, the map $\tilde{\pi}$ is also a ${\mathbf{CH}}_\Gamma$-factor
map.

Endow the space $\tilde{Z}$ with the Baire $\sigma$-algebra $\mathcal{B}a( 
\tilde{Z})$, which coincides with the restriction of $\mathcal{B}a(\tilde{Y} 
^\Gamma)=\mathcal{B}a(\tilde{Y})^{\otimes\Gamma}$ to $\tilde{Z}$ by \cite[ 
Lemma 2.1]{jt-etds}. In particular, the maps in the previous diagram preserve
Baire measurability.

Step 2: we construct a probability measure $\tilde{\theta}$ on $(\tilde{Z}, 
\mathcal{B}a(\tilde{Z}))$. Let $\{\nu_x\}_{x\in \tilde{X}}$ be the canonical
disintegration (see Theorem \ref{thm-disintegration}) of $\tilde{\nu}$ with respect to the factor map $\tilde{\beta} 
:\tilde{Y}\rightarrow \tilde{X}$. For each $\gamma\in \Gamma$ and $x\in 
\tilde{X}$, define $\tilde{\nu}_{\gamma,x}$ on $(\tilde{Y}^{\gamma H}, 
\mathcal{B}a(\tilde{Y}^{\gamma H}))$ by 
\begin{equation*}
\tilde{\nu}_{\gamma,x}(E):=\nu_x(\{y\in \tilde{Y}:(\tilde{S} 
^{\eta^{-1}}y)_{\gamma\eta\in \gamma H}\in E\}).
\end{equation*}
Since we can identify the Baire $\sigma$-algebra $\mathcal{B}a(\tilde{Y} 
^{\gamma H})$ with the product $\sigma$-algebra $\mathcal{B}a(\tilde{Y} 
)^{\otimes \gamma H}$, it follows that $\tilde{\nu}_{\gamma,x}$ is well
defined by first verifying cylinder sets and then applying the $\pi$-$ 
\lambda $ theorem.

By the axiom of choice, we pick a representative from each left coset $ 
\gamma H$ an element $\omega$ and denote their collection by $\Omega$. We
identify $\tilde{Y}^{\Gamma}=\Pi_{\omega\in \Omega}\tilde{Y}^{\omega H}$ so
as to define a probability measure 
\begin{equation*}
\tilde{\nu}_x^{\prime }:=\otimes_{\omega\in \Omega}\tilde{\nu}_{\omega, 
\tilde{T}^{\omega^{-1}}x}
\end{equation*}
on $\mathcal{B}a(\tilde{Y}^\Gamma)=\mathcal{B}a(\tilde{Y}^H)^{\otimes
\Omega} $. We show that the definition of $\tilde{\nu}_x^{\prime }$ is
independent from the choice of representatives. If $x\in X$, $A\in \mathcal{B 
}a(\tilde{Y}^{\gamma H})$, and $\gamma_1=\gamma_2\eta_1$ for some $\gamma_1,\gamma_2\in \Gamma$ and $\eta_1\in H$,
which means $ 
\gamma_1H=\gamma_2H$, then we have 
\begin{equation*}  
\begin{split}
\tilde{\nu}_{\gamma_1,\tilde{T}^{\gamma_1^{-1}}x}(E)&=\nu_{\tilde{T} 
^{\gamma_1^{-1}}x}(\{y:(\tilde{S}^{\eta^{-1}}y)_{\gamma_1\eta\in
\gamma_1H}\in E \}) \\
&=\nu_{\tilde{T}^{\eta_1^{-1}}\tilde{T}^{\gamma_2^{-1}}x}(\{y:(\tilde{S} 
^{\eta^{-1}}y)_{\gamma_2\eta_1\eta\in \gamma_2 H}\in E \}) \\
&=\tilde{S}^{\eta^{-1}_1}_{*}\nu_{\tilde{T}^{\gamma_2^{-1}}x}(\{y:(\tilde{S} 
^{\eta^{-1}}y)_{\gamma_2\eta_1\eta\in \gamma_2 H}\in E \}) \\
&=\nu_{\tilde{T}^{\gamma_2^{-1}}x}(\{y:(\tilde{S}^{\eta^{-1}}\tilde{S} 
^{\eta_1^{-1}}y)_{\gamma_2\eta_1\eta\in \gamma_2 H}\in E \}) \\
&=\tilde{\nu}_{\gamma_2,\tilde{T}^{\gamma_2^{-1}}x}(E).
\end{split} 
\end{equation*}
Hence, a finite product of the probability measures $\tilde{\nu} 
_{\gamma,\tilde T^{\gamma^{-1}}x}$ for $\gamma$ ranging from different left
cosets is independent from the choice of representatives. By the uniqueness
part of Carath\'eodory's extension theorem, we conclude that the product
probability measure $\tilde{\nu}_x^{\prime }$ is independent from the choice
of representatives.

Next, we define a measure $\tilde{\nu}_x$ on $(\tilde{Z},\mathcal{B}a(\tilde{ 
Z}))$ by 
\begin{equation*}
\tilde{\nu}_x(E\cap \tilde{Z})\coloneqq \tilde{\nu}_x^{\prime }(E)
\end{equation*}
for each $E\in \mathcal{B}a(\tilde{Y}^\Gamma)$. Note that $\tilde{Z}$ is a
closed subset of $\tilde{Y}^\Gamma$ but may not be Baire measurable.
Therefore, we need to check the well-definedness of $\tilde{\nu}_x$. 
It suffices to show that 
\begin{equation*}
E\in \mathcal{B}a(\tilde{Y}^\Gamma) \text{ and }E\cap \tilde{Z}=\emptyset
\Rightarrow \tilde{\nu}_x^{\prime }(E)=0.
\end{equation*}
Since $\mathcal{B}a(\tilde{Y}^\Gamma)=\mathcal{B}a(\tilde{Y} 
)^{\otimes\Gamma} $, $E$ depends on only countably many coordinates. Hence
there exists $\{\gamma_i\}_{i=1}^{\infty}$ such that $E=E^{\prime }\times
\otimes_{\gamma\in \Gamma\backslash\{\gamma_i\}_{i=1}^{\infty}}\tilde{Y}$
where $E^{\prime }\in \mathcal{B}a(\tilde{Y})^{\otimes \mathbb{N}}$. Let 
\begin{equation*}
\tilde{Z}^*=\{(y_\gamma)_\gamma\in \tilde{Y}^\Gamma: y_{\gamma_i\eta}=\tilde{ 
S}^{\eta^{-1}}y_{\gamma_i} \text{ and }\tilde{\beta}(y_{\gamma_i})=\tilde{T} 
^{\gamma_i^{-1}}\tilde{\beta}(y_e)\; \text{ for all } i\geq 1 \text{ and }
\gamma_i\eta\in \{\gamma_i: i\geq 1\}\}.
\end{equation*}
Since $\tilde{Z}\cap E=\emptyset$ implies $\tilde{Z}^*\cap E=\emptyset$, in
order to show $\tilde{\nu}^{\prime }_x(E)$=0, it suffices to show $\tilde{\nu 
}^{\prime }_x(\tilde{Z}^*)=1$ (since $\tilde{Z}^*$ only depends on countable
many coordinates, it is guaranteed to be Baire measurable). We group the $ 
\gamma_i$ according to the left cosets $\omega H$ they belong to. So suppose 
$\{\gamma_i\}=\bigcup_i \{\omega_i\eta_{i,j}\}_j$, where each $\eta_{i,j}\in
H$, $\omega_i\in \Omega$. For each $\omega_i$, we have 
\begin{equation*}
\tilde{\nu}_{\omega_i,\tilde{T}^{\omega_i^{-1}}x}(\{(y_{\omega_i\eta})_{ 
\omega_i\eta\in \omega_iH}: y_{\omega_i\eta_{i,j}}=\tilde{S} 
^{\eta_{i,j}^{-1}}y_{\omega_i}, \tilde{\beta}(y_{\omega_i})= \tilde{T} 
^{\omega_i^{-1}}x\})=1.
\end{equation*}
Note that 
\begin{equation*}
\otimes_{i=1}^\infty\{(y_{\omega_i\eta})_{\omega_i\eta\in \omega_iH}:
y_{\omega_i\eta_{i,j}}=\tilde{S}^{\eta_{i,j}^{-1}}y_{\omega_i}, \tilde{\beta} 
(y_{\omega_i})= \tilde{T}^{\omega_i^{-1}}x\}\times \otimes_{\omega\neq
\omega_i}\tilde{Y}^{\omega H}\subset \tilde{Z}^*.
\end{equation*}
Thus $\tilde{\nu}_x^{\prime }(\tilde{Z}^*)=1$ and consequently, $\tilde{\nu} 
_x$ is well defined.

For any set $A=A^{\prime }\cap \tilde{Z}\in \mathcal{B}a(\tilde{Z})$ where $ 
A^{\prime }\in \mathcal{B}a(\tilde{Y}^\Gamma)$, we aim to prove the mapping $ 
x\mapsto \tilde{\nu}_x(A)$ is Baire measurable. Suppose there are $ 
\omega_1,\ldots,\omega_m\in \Omega$, $\eta_{i,1}, \ldots,\eta_{i,n_i}\in H$
for each $i\leq m$, and $A_{i,j}\in \mathcal{B}a(\tilde{Y})$ for all $i\leq
m $ and $j\leq n_i$ such that $A^{\prime
}=\{(y_\gamma)_\gamma:y_{\omega_i\eta_{i,j}}\in A_{i,j}\; \forall i\leq m,
j\leq n_i\}$. Then 
\begin{equation}  \label{cross_session}
\begin{split}
\tilde{\nu}_x^{\prime }(A^{\prime })&=\prod_{i=1}^m \tilde{\nu}_{\omega_i, 
\tilde{T}^{\omega_i^{-1}}x}(\{(y_{\omega_i\eta})_{\eta\in
H}:y_{\omega_i\eta_{i,j}}\in A_{i,j} \text{ for any } j\leq n_i \}) \\
&=\prod_{i=1}^m \nu_{\tilde{T}^{\omega_i^{-1}}x}(\tilde{S} 
^{\eta_{i,1}}(A_{i,1})\cap\cdots\cap \tilde{S}^{\eta_{i,n_i}}(A_{i,n_i})).
\end{split} 
\end{equation}
Since $x\mapsto \nu_x$ is Baire measurable and the product of finitely many
Baire measurable functions is still Baire measurable, it follows that $ 
x\mapsto \tilde{\nu}_x(A)$ is Baire measurable whenever $A^{\prime }\in 
\mathcal{B}a(\tilde{Y}^\Gamma)$, as the cylinder sets generate $\mathcal{B}a( 
\tilde{Y}^\Gamma)$. As a result, we are able to define 
\begin{equation*}
\tilde{\theta}:=\int_{\tilde{X}} \tilde{\nu}_xd\tilde{\mu}(x).
\end{equation*}
Observe that each $\tilde{\nu}_x(\tilde{Z})=\tilde{\nu}_x^{\prime }(\tilde{Y} 
^{\Gamma})=1$. Therefore, $\tilde{\theta}$ is a probability measure as well.

Step 3: we verify that $(\tilde{Z},\mathcal{B}a(\tilde{Z}),\tilde{\theta}, 
\tilde{R})$ is a ${\mathbf{CHPrb}_{\Gamma}}$-system satisfying the desired
diagram.

We claim that $\tilde{R}$ is a measure-preserving transformation. Suppose $ 
\gamma^{\prime }\in \Gamma$, $x\in \tilde X$, $\omega_1,\ldots,\omega_m\in
\Omega$, $\eta_{i,1},\ldots,\eta_{i,n_i}\in H$ for each $i\leq m$, and $ 
A_{i,j}\in \mathcal{B}a(\tilde{Y})$ for all $i\leq m$ and $j\leq n_i$. Then
we obtain 
\begin{eqnarray*}
&& \tilde{R}_{*}^{\gamma^{\prime }} \tilde{\nu}_x^{\prime
}(\{(y_\gamma)_\gamma: y_{\omega_i\eta_{i,j}}\in A_{i,j}\;\forall i\leq m,
j\leq n_i\}) \\
&=& \tilde{\nu}_x^{\prime }(\{(y_\gamma)_\gamma: y_{\gamma^{\prime
-1}\omega_i\eta_{i,j}}\in A_{i,j}\;\forall i\leq m, j\leq n_i\}) \\
&=& \prod_{i=1}^m \nu_{\tilde{T}^{\omega_i^{-1}\gamma^{\prime }}x}^{\prime }( 
\tilde{S}^{\eta_{i,1}}(A_{i,1})\cap\cdots \cap \tilde{S}^{ 
\eta_{i,n_i}}(A_{i,n_i})) \\
&=& \tilde{\nu}_{\tilde{T}^{\gamma^{\prime }} x}^{\prime
}\{(y_\gamma)_\gamma: y_{\omega_i\eta_{i,j}}\in A_{i,j}\;\forall i\leq m,
j\leq n_i\},
\end{eqnarray*}
where the last two equalities follow from two applications of  
\eqref{cross_session} (while in the first applications we work with family
of representatives $\gamma^{\prime -1}\Omega$ instead of $\Omega$).
Therefore, $\tilde{R}^{\gamma^{\prime }}_{*}\tilde{\nu}_x^{\prime }=\tilde{ 
\nu}_{\tilde{T}^{\gamma^{\prime }}x}^{\prime }$. By the definition of $ 
\tilde{\nu}_x$, $\tilde{R}^{\gamma^{\prime }}_{*}\tilde{\nu}_x=\tilde{\nu}_{ 
\tilde{T}^{\gamma^{\prime }}x}$. Since $\tilde{\mu}$ is $\tilde{T}$ 
-invariant, integrating $\tilde{R}^{\gamma^{\prime }}_{*}\tilde{\nu}_x= 
\tilde{\nu}_{T^{\gamma^{\prime }}x}$ over $\tilde\mu$ gives $\tilde{R} 
^{\gamma^{\prime }}_{*}\tilde{\theta}=\tilde{\theta}$.

Recall that the map $\tilde{\alpha}$ is a ${\mathbf{CH}}_H$-factor map.
Moreover, for any $x\in \tilde{X}$, we have $\tilde{\alpha}^*\tilde{\nu} 
_x=\nu_x$ by observing that for any $A\in \mathcal{B}a(\tilde{Y})$, 
\begin{equation*}
\tilde{\nu}_x(\alpha^{-1}A)=\tilde{\nu}_x^{\prime }(A\times
\otimes_{\gamma\neq e,\gamma\in \Gamma}Y)=\nu_x(A).
\end{equation*}
Therefore, 
\begin{equation}  \label{eq-pushforward}
\tilde{\alpha}_{*}\tilde{\theta}=\int_{\tilde{X}} \tilde{\alpha}_{*}\tilde{ 
\nu}_x\tilde{\mu}(dx)=\int_{\tilde{X}} \nu_x\tilde{\mu}(dx)=\tilde{\nu},
\end{equation}
which shows that $\tilde{\alpha}$ is a ${\mathbf{CHPrb}}_H$-factor map.

It remains to show that $\tilde\pi$ is a ${\mathbf{CHPrb}_{\Gamma}}$-factor
map, but this is a direct consequence of \eqref{eq-pushforward}: 
\begin{equation*}
\tilde{\pi}_{*}\tilde{\theta}=\beta^*\tilde{\nu}=\tilde{\mu}.
\end{equation*}
\end{proof}


\begin{thebibliography}{10}

\bibitem{abo}
E.~Ackelsberg, V.~Bergelson, and O.~Shalom.
\newblock Khintchine-type recurrence for 3-point configurations.
\newblock {\em Forum Math. Sigma}, 10:57, 2022.
\newblock Id/No e107.

\bibitem{austin2016nonconventional}
T.~Austin.
\newblock Non-conventional ergodic averages for several commuting actions of an amenable group.
\newblock {\em J. Anal. Math.}, 130:243--274, 2016.

\bibitem{bauer-measure}
H.~Bauer.
\newblock {\em Measure and integration theory. {Transl}. from the {German} by {Robert} {B}. {Burckel}}, volume~26 of {\em De Gruyter Stud. Math.}
\newblock Berlin: de Gruyter, 2001.

\bibitem{bhk}
V.~Bergelson, B.~Host, and B.~Kra.
\newblock Multiple recurrence and nilsequences (with an appendix by {Imre} {Ruzsa}).
\newblock {\em Invent. Math.}, 160(2):261--303, 2005.

\bibitem{bergelson-uniformity}
V.~Bergelson, B.~Host, R.~McCutcheon, and F.~Parreau.
\newblock Aspects of uniformity in recurrence.
\newblock {\em Colloq. Math.}, 84-85:549--576, 2000.

\bibitem{bergelson2004failure}
V.~Bergelson and A.~Leibman.
\newblock {Failure of Roth theorem for solvable groups of exponential growth}.
\newblock {\em Ergodic Theory Dynam. Systems}, 24(1):45--53, 2004.

\bibitem{bergelson2007central}
V.~Bergelson and R.~McCutcheon.
\newblock Central sets and a non-commutative {Roth} theorem.
\newblock {\em Am. J. Math.}, 129(5):1251--1275, 2007.

\bibitem{bergelson1997roth}
V.~Bergelson, R.~McCutcheon, and Q.~Zhang.
\newblock A {Roth} theorem for amenable groups.
\newblock {\em Am. J. Math.}, 119(6):1173--1211, 1997.

\bibitem{DRZ}
V.~Bergelson, D.~Robertson, and P.~Zorin-Kranich.
\newblock Triangles in {Cartesian} squares of quasirandom groups.
\newblock {\em Comb. Probab. Comput.}, 26(2):161--182, 2017.

\bibitem{btz-recurrence}
V.~Bergelson, T.~Tao, and T.~Ziegler.
\newblock Multiple recurrence and convergence results associated to {{\(\mathbb F_P^\omega\)}}-actions.
\newblock {\em J. Anal. Math.}, 127:329--378, 2015.

\bibitem{chu-two}
Q.~Chu.
\newblock Multiple recurrence for two commuting transformations.
\newblock {\em Ergodic Theory Dynam. Systems}, 31(3):771--792, 2011.

\bibitem{lowerbound}
Q.~Chu and P.~Zorin-Kranich.
\newblock Lower bound in the {R}oth theorem for amenable groups.
\newblock {\em Ergodic Theory Dynam. Systems}, 35(6):1746--1766, 2015.

\bibitem{j-dcds}
P.~Durcik, R.~Greenfeld, A.~Iseli, A.~Jamneshan, and J.~Madrid.
\newblock An uncountable ergodic {R}oth theorem and applications.
\newblock {\em Discrete Contin. Dyn. Syst.}, 42(11):5509--5540, 2022.

\bibitem{furstenberg1977ergodic}
H.~Furstenberg.
\newblock Ergodic behavior of diagonal measures and a theorem of {Szemeredi} on arithmetic progressions.
\newblock {\em J. Anal. Math.}, 31:204--256, 1977.

\bibitem{furstenberg1985ergodic}
H.~Furstenberg and Y.~Katznelson.
\newblock An ergodic {Szemer{\'e}di} theorem for {IP}-systems and combinatorial theory.
\newblock {\em J. Anal. Math.}, 45:117--168, 1985.

\bibitem{host-commuting}
B.~Host.
\newblock Ergodic seminorms for commuting transformations and applications.
\newblock {\em Studia Math.}, 195(1):31--49, 2009.

\bibitem{host2005nonconventional}
B.~Host and B.~Kra.
\newblock {Nonconventional ergodic averages and nilmanifolds}.
\newblock {\em Ann.~Math.}, 161(1):397--488, 2005.

\bibitem{j-etds}
A.~Jamneshan.
\newblock An uncountable {F}urstenberg-{Z}immer structure theory.
\newblock {\em Ergodic Theory Dynam. Systems}, 43(7):2404--2436, 2023.

\bibitem{jt-studia}
A.~Jamneshan and T.~Tao.
\newblock An uncountable {M}ackey-{Z}immer theorem.
\newblock {\em Studia Math.}, 266(3):241--289, 2022.

\bibitem{jt-fund}
A.~Jamneshan and T.~Tao.
\newblock Foundational aspects of uncountable measure theory: {G}elfand duality, {R}iesz representation, canonical models, and canonical disintegration.
\newblock {\em Fund. Math.}, 261(1):1--98, 2023.

\bibitem{jt-etds}
A.~Jamneshan and T.~Tao.
\newblock An uncountable {M}oore-{S}chmidt theorem.
\newblock {\em Ergodic Theory Dynam. Systems}, 43(7):2376--2403, 2023.

\bibitem{shalom-khintchine}
O.~Shalom.
\newblock Multiple ergodic averages in abelian groups and {K}hintchine type recurrence.
\newblock {\em Trans. Amer. Math. Soc.}, 375(4):2729--2761, 2022.

\bibitem{tao-removal}
T.~Tao.
\newblock Szemer\'{e}di's regularity lemma revisited.
\newblock {\em Contrib. Discrete Math.}, 1(1):8--28, 2006.

\bibitem{walsh2012norm}
N.~M. Walsh.
\newblock {Norm convergence of nilpotent ergodic averages}.
\newblock {\em Ann. of Math.}, 175(3):1667--1688, 2012.

\bibitem{Wysoczanski}
J.~Wysocza\'{n}ski.
\newblock On uniformly amenable groups.
\newblock {\em Proc. Amer. Math. Soc.}, 102(4):933--938, 1988.

\bibitem{zorin2016norm}
P.~Zorin-Kranich.
\newblock Norm convergence of multiple ergodic averages on amenable groups.
\newblock {\em J. Anal. Math.}, 130:219--241, 2016.

\end{thebibliography}
\end{document}